\newtheorem{theorem}{Theorem}[section]
\newtheorem{proposition}[theorem]{Proposition}
\newtheorem{lemma}[theorem]{Lemma}
\newtheorem{example}[theorem]{Example}
\def\qed{\hfill $\Box$\medskip}
\def\IC{{\mathbb C}}
\def\cS{{\mathcal S}}
\def\({{\left(}}
\def\){{\right)}}
\def\Diag{{\rm Diag}\,}
\def\tr{{\rm tr}\,}
\def\re{{\rm Re}\,}
\begin{document}
\baselineskip 16.3pt

\title[Preserving the higher numerical ranges]
      {Linear maps preserving the higher numerical ranges \\ of tensor products of matrices}

\author{Ajda Fo\v sner}
\author{Zejun Huang}
\author{Chi-Kwong Li}
\author{Yiu-Tung Poon}
\author{Nung-Sing Sze}
\address{Ajda Fo\v sner,
Faculty of Management, University of Primorska,
Cankarjeva 5, SI-6104 Koper, Slovenia}
\email{ajda.fosner@fm-kp.si}
\address{Zejun Huang,
Department of Applied Mathematics,
The Hong Kong Polytechnic University,
Hung Hom, Hong Kong}
\email{huangzejun@yahoo.cn}
\address{Chi-Kwong Li,
Department of Mathematics, College of William and Mary, Williamsburg, VA 23187, USA;
Department of Mathematics,
University of Hong Kong, Pokfulam, Hong Kong}
\email{ckli@math.wm.edu}
\address{Yiu-Tung Poon,
Department of Mathematics, Iowa State University,
Ames, Iowa 50011, USA}
\email{ytpoon@iastate.edu}
\address{Nung-Sing Sze,
Department of Applied Mathematics,
The Hong Kong Polytechnic University,
Hung Hom, Hong Kong}
\email{raymond.sze@polyu.edu.hk}

\date{}

\maketitle

\centerline{\bf Dedicated to Professor Natalia Bebiano on the occasion of her birthday.}

\begin{abstract}
For a positive integer $n$, let $M_n$  be the set of $n \times n$ complex matrices.
Suppose $m,n\ge 2$ are positive integers and $k\in \{1,\ldots, mn-1\}$. Denote by $W_k(X)$
the $k$-numerical range of a  matrix $X\in M_{mn}$. It is shown that a linear map
$\phi: M_{mn}\rightarrow M_{mn}$ satisfies
\begin{equation*}
W_k(\phi(A\otimes B)) = W_k(A\otimes B)
\end{equation*}
for all $A \in M_m $  and $B \in M_n $ if and only if there is a unitary $U \in M_{mn}$ such that one of the following holds.
\begin{enumerate}
\renewcommand{\labelenumi}{(\roman{enumi})}
  \item For all $A\in M_m$, $B\in M_n$,
        $$\phi(A\otimes B)=U(\varphi(A\otimes B))U^*.$$
  \item $mn = 2k$ and   for all $A\in M_m$, $B\in M_n$,
        $$\phi(A\otimes B)=(\tr(A\otimes B)/k)I_{mn}-U(\varphi(A\otimes B))U^*,$$
\end{enumerate}
where (1) $\varphi$ is the identity map $A \otimes B \mapsto A\otimes B$ or
the transposition map $A\otimes B \mapsto (A\otimes B)^t$, or
(2) $\min\{m,n\} \le 2$ and $\varphi$ has the form
$A \otimes B \mapsto A \otimes B^t$ or
$A \otimes B \mapsto A^t \otimes B$.
\end{abstract}

\medskip\noindent
{\em 2010 Math. Subj. Class.}: 15A69, 15A86, 15A60, 47A12.

\noindent
{\em Key words}: Hermitian matrix, linear preserver,  $k$-numerical range, tensor product of matrices.

\medskip

\section{Introduction and the main theorem}

Let $M_n$ ($H_n$) be the set of $n\times n$ complex (Hermitian) matrices.
For $k\in \{1,\ldots, n\}$,  the $k$-numerical range of $A\in M_n$ is defined as $$W_k(A)=\{\tr(X^*AX)/k: X \hbox{ is } n\times k, \ X^*X=I_k\}.$$
Since
$W_n(A)=\{\tr(A)/n\}$, we always assume that $k<n$ to avoid trivial consideration.
When $k=1$, we have the classical numerical range $W_1(A)=W(A)$, which has been
studied extensively; see \cite{GR,Halmos,HJ}.
Denote by $A\otimes B$ the tensor (Kronecker)
product of matrices $A\in M_m$ and $B\in M_n$. The purpose of this paper
is to characterize linear maps on $M_{mn}$  satisfying
\begin{equation}\label{eq0}
W_k(A\otimes B) = W_k(\phi(A\otimes B)) \hbox{ for all } A \in M_m, B \in M_n.
\end{equation}
The study is motivated by two areas of research.

First, in the last few decades there has been considerable interest in studying
linear preservers on matrix algebras as well
as on more general rings and operator algebras. 
For example, Frobenius \cite{Frobenious} showed that a linear operator $\phi: M_n\rightarrow M_n$ satisfies $$\det(\phi(A)) = \det(A)$$ for all $A \in M_n$
if and only if  there are $U,V\in M_n$ with $\det(UV) = 1$ such that $\phi$ has the form
\begin{equation}\label{eq01}
A \mapsto UAV \quad \hbox{ or } \quad A \mapsto UA^tV,
  \end{equation}
  where $A^t$ denotes the transpose of $A$.
Clearly, a map of the above form is linear and leaves the determinant function invariant. But it is
interesting that a linear map preserving the determinant function must be of this form. Furthermore,
in \cite{Dieudonne} Dieudonn\'{e} showed that a bijective
linear operator $\phi: M_n \rightarrow M_n$ maps the set of singular matrices into itself if and only if there are invertible $U,V \in M_n$ such that $\phi$ has
the standard form (\ref{eq01}). One may see \cite{LiPierce} and references therein for results on linear preserver problems. For more new directions and active
research on preserver problems motivated by theory and applications, one may see, for example, \cite{BresarChebotarMartindale, Molnar, Wan}.

In connection to the linear preservers of the $k$-numerical range,
Pellegrini \cite{Pellegrini} proved that every linear map $\phi:M_n\to M_n$
preserving the numerical range  is of the form
\begin{equation}
\label{a}
A\mapsto UAU\* \quad {\rm or} \quad A\mapsto UA^tU^*
\end{equation}
for some unitary $U\in M_n$. Three years later, Pierce and Watkins \cite{PierceWatkins} extended this result to the $k$-numerical ranges as
long as $n \ne 2k$. In \cite{Li} (see also \cite{Omladic}) it was shown that for $n = 2k$,
a linear map $\phi: M_n\to M_n$ preserves the
$k$-numerical range if and only if there exists a unitary $U\in M_n$ such that $\phi$ has the form (\ref{a}), or
\begin{equation}
\label{b}
A\mapsto (\tr(A)/k)I_n-UAU^* \quad {\rm or} \quad A\mapsto (\tr(A)/k)I_n-UA^tU^*.
\end{equation}
One may see \cite{CheungLiPoon,LiPoonSze} for
more information about the results on linear maps on $M_n$ which preserve the $k$-numerical range.

Another motivation of our study comes from quantum information science.
In quantum physics (e.g., see \cite{G}), quantum states are represented by
density matrices $D$ in $H_n$, i.e., positive semidefinite matrices with trace 1.
If $D$ has rank one, i.e., $D = xx^*$ for some $x \in \IC^n$ with $x^*x = 1$,
then $D$ is a pure state. Otherwise, $D$ is a mixed state, which can be written
as a convex combination of pure states. In a quantum system, every
observable corresponds to a Hermitian matrix $A$. Then
$$W(A) = \{ \tr(Axx^*): x \in \IC^n, x^*x = 1\}$$
can be viewed as the set of all possible mean measurements of quantum states.
If $A = A_1+iA_2 \in M_n$, where $A_1, A_2 \in H_n$, then
every point in $W(A)$ and be viewed as the set of all joint measurements
$x^*Ax = x^*A_1x + i x^*A_2x$ of the quantum state $x$ with respect to the two observables
associated with  $A_1$ and $A_2$, and thus $W(A)$ is the set of all possible joint
measurements.
By the convexity of $W(A)$ (e.g., see \cite{GR,Halmos,HJ}),
$$W(A) = \{ \tr(AD): D \in H_n \hbox{ is a density matrix} \}.$$
Thus, $W(A)$ can also be viewed as the set of all possible joint
measurements of two observables on mixed states.
Now, by the convexity of $W_k(A)$,
and the fact that the convex hull of the set $\{P/k: P^2 = P = P^*, \tr P = k\}$
equals the set $\cS_k$ of density matrices $D$ satisfying $I_n/k - D$ is positive
semidefinite, we have
$$
W_k(A) = \{\tr(AP)/k: P \in H_n, P^2 = P = P^*, \tr P = k\}
= \{\tr(AD): D \in \cS_k \}.$$
So, $W_k(A)$ can be viewed as the set of joint measurements of the states in
$\cS_k$ with respect to the  observables
associated with $A$.
Suppose $A\in H_m$ and $B\in H_n$ correspond to observables of two quantum systems
with quantum states $D_1 \in M_m$ and $D_2\in M_n$.
Then the tensor (Kronecker) product
$A\otimes B$ correspond to an observable on the joint (bipartite) system
with quantum states $D \in M_{mn}$ expressed as the convex combination
of uncorrelated quantum states $D_1 \otimes D_2$, where
$D_1 \in M_m$ and $D_2 \in M_n$ are quantum states (density matrices).
In this connection, we are interested in studying linear maps $\phi$ on $M_{mn}$
satisfying (\ref{eq0}).

In fact, this line of study has been carried out in \cite{FHLS,FHLS2,FHLS3,J,SLPS}.
Suppose $f(X)$ is a linear function on the matrix $X\in M_{mn}$.
It is shown in  \cite{FHLS} that the linear maps $\phi$ on $H_{mn}$   satisfying
\begin{equation}\label{eq02}
f(\phi(A\otimes B))=f(A\otimes B)
\end{equation}
for  all  $A \in H_m$ and $B \in H_n$ when $f(X)$ is the spectrum or the spectral radius of $X$. In \cite{FHLS2, FHLS3}, the authors characterized the linear maps $\phi$ on $M_{mn}$ satisfying (\ref{eq02})
for  all $ A \in M_m$   and $ B \in M_n$  when $f(X)$ is a Ky Fan norm, Schatten norm or the numerical radius of $X$.

\medskip

The following is our main result.

\begin{theorem}
\label{T1}
Let $k\in \{1,\ldots, mn-1\}$. A linear map $\phi: M_{mn}\rightarrow M_{mn}$ satisfies
\begin{equation}
\label{E1}
W_k(\phi(A\otimes B)) = W_k(A\otimes B)
\end{equation}
for all $A\in M_m$ and $B \in M_n$ if and only if there is a unitary $U \in M_{mn}$ such that one of the following holds.
\begin{enumerate}
\renewcommand{\labelenumi}{(\roman{enumi})}
  \item For all $A\in M_m$, $B\in M_n$,
      \begin{equation}\label{E2}
      \phi(A\otimes B)=U(\varphi(A\otimes B))U^*.
      \end{equation}
  \item $mn = 2k$ and   for all $A\in M_m$, $B\in M_n$,
         \begin{equation}\label{E3}\phi(A\otimes B)=(\tr(A\otimes B)/k)I_{mn}-
         U(\varphi(A \otimes B))U^*,
         \end{equation}
\end{enumerate}
where (1) $\varphi$ is the identity map $A \otimes B \mapsto A\otimes B$ or
the transposition map $A\otimes B \mapsto (A\otimes B)^t$, or
(2) $\min\{m,n\} \le 2$ and $\varphi$ has the form
$A \otimes B \mapsto A \otimes B^t$ or
$A \otimes B \mapsto A^t \otimes B$.
\end{theorem}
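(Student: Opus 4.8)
\medskip
\noindent{\em Proof strategy.}
The \emph{if} direction is a routine verification. Conjugation by a fixed unitary preserves $W_k$, and $W_k(X^t)=W_k(X)$ for every $X$ because $\tr(X^tP)=\tr(X\,P^t)$ and $P\mapsto P^t$ permutes the rank-$k$ orthogonal projections; this handles the identity and transposition choices of $\varphi$ in (1). When $mn=2k$, the map $X\mapsto(\tr X/k)I_{mn}-X$ preserves $W_k$ since $P\mapsto I_{mn}-P$ permutes the rank-$k$ projections and $\tr\bigl(((\tr X/k)I_{mn}-X)P\bigr)/k=\tr X/k-\tr(XP)/k=\tr\bigl(X(I_{mn}-P)\bigr)/k$, giving (ii). Finally, when $\min\{m,n\}\le2$ every square matrix of size at most $2$ is unitarily similar to its transpose, so $A\otimes B^t$ (respectively $A^t\otimes B$) is unitarily similar to $A\otimes B$ through a unitary depending on $B$ (respectively $A$); hence each partial-transpose map in (2) preserves $W_k$ on every simple tensor.

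For the \emph{only if} direction I would first reduce to a normalized Hermitian problem. Since the rank-$k$ projections span $H_{mn}$ for $1\le k\le mn-1$, one has $W_k(X)\subseteq\IR$ exactly when $X\in H_{mn}$; as the Hermitian simple tensors $H\otimes K$ ($H\in H_m$, $K\in H_n$) span $H_{mn}$ over $\IR$ and $\phi$ is $\IC$-linear, it follows that $\phi(H_{mn})\subseteq H_{mn}$ and that $\phi$ is determined by its restriction to $H_{mn}$. Applying (\ref{E1}) to Hermitian $A,B$ gives $W_k(\phi(H\otimes K))=W_k(H\otimes K)$ for all $H\in H_m$, $K\in H_n$; for Hermitian $Y$ one has $W_k(Y)=[s_k^-(Y),s_k^+(Y)]$, where $k\,s_k^+(Y)$ and $k\,s_k^-(Y)$ are the sums of the $k$ largest and the $k$ smallest eigenvalues of $Y$, and since $-(H\otimes K)=(-H)\otimes K$ the two endpoint identities collapse to
\[
s_k^+\bigl(\phi(H\otimes K)\bigr)=s_k^+(H\otimes K)\qquad\text{for all }H\in H_m,\ K\in H_n.
\]
Taking $H\otimes K=I_m\otimes I_n$ gives $W_k(\phi(I_{mn}))=\{1\}$, which forces $\tr(\phi(I_{mn})P)=k$ for every rank-$k$ projection $P$, hence $\phi(I_{mn})=I_{mn}$.

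The heart of the proof is to recover $\phi$ from this family of identities. The plan is to evaluate $s_k^+(\phi(H\otimes K))=s_k^+(H\otimes K)$ — and, through the $\cS_k$-description $W_k(Y)=\{\tr(YD):D\in\cS_k\}$, the corresponding constraints on the full numerical ranges — on Hermitian simple tensors assembled from rank-one projections and reflections, such as $xx^*\otimes yy^*$, $(I_m-2xx^*)\otimes yy^*$, $xx^*\otimes(I_n-2yy^*)$, $(\alpha\,xx^*+\beta I_m)\otimes(\gamma\,yy^*+\delta I_n)$ and analogous tensor products of Hermitian matrices of rank at most two, reading off the relevant partial eigenvalue sums of each image and then forcing these data to be mutually consistent through the $\IC$-linearity of $\phi$ (recall the simple tensors span $M_{mn}$), possibly in combination with the classification of $W_j$-preservers on $M_m$ and $M_n$ recalled above. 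This should show that $\phi$ — or, when $mn=2k$, possibly the map $X\mapsto(\tr X/k)I_{mn}-\phi(X)$ — carries every product rank-one projection $xx^*\otimes yy^*$ to a rank-one projection and preserves the transition probabilities $\tr\bigl((xx^*\otimes yy^*)(uu^*\otimes vv^*)\bigr)=|\langle x,u\rangle|^2\,|\langle y,v\rangle|^2$. A Wigner--Uhlhorn-type argument exploiting the product structure of this frame then supplies a unitary $U\in M_{mn}$ and a linear bijection $\varphi$ of $M_{mn}$ taking simple tensors to simple tensors with $\phi(A\otimes B)=U\varphi(A\otimes B)U^*$ (or the analogous expression in case (ii)). Finally, the linear bijections $\varphi$ of $M_m\otimes M_n$ that send simple tensors to simple tensors and satisfy $W_k(\varphi(A\otimes B))=W_k(A\otimes B)$ for all $A,B$ are exactly the identity and the transpose on each factor, together with a swap of the two factors (possible only for $m=n$, and then already absorbed into the conjugation by $U$) and a single-factor transpose (which preserves $W_k$ only when the transposed factor has size at most $2$); matching this with the normalization yields precisely the forms (1) and (2), completing the proof together with the \emph{if} direction.

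The main obstacle is this core step. One has access only to the values of $W_k$ on genuine simple tensors $A\otimes B$, never on their linear combinations, and for $k\ge2$ the numerical range $W_k$ of a Hermitian matrix records only the top-$k$ and bottom-$k$ averaged eigenvalue sums, not the individual eigenvalues, so considerable care is required to extract from such limited data enough inner-product information about the images $\phi(xx^*\otimes yy^*)$ to run the Wigner--Uhlhorn argument, and to detect and isolate the genuinely exceptional case $mn=2k$ responsible for family (ii). A secondary difficulty is to rule out all forms beyond (1) and (2): one must exhibit explicit $A$ and $B$ showing that the maps $A\otimes B\mapsto A^t\otimes B$ and $A\otimes B\mapsto A\otimes B^t$ fail to preserve $W_k$ as soon as the transposed factor has size $\ge3$.
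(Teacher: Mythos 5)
Your sufficiency argument, the reduction to Hermitian matrices, and the observation $\phi(I_{mn})=I_{mn}$ all match the paper's Lemma~\ref{lem0}, and you correctly identify that explicit examples are needed to rule out the single-factor transposes when both factors have size $\geq 3$ (the paper's Example~\ref{Ex1}). However, the proposal leaves the genuine core of the proof as an unexecuted plan, and the route you sketch for that core does not obviously work.

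The paper's entire proof effort is concentrated in Lemma~\ref{le4}: for a fixed product orthonormal basis $\{x_ix_i^*\otimes y_jy_j^*\}$, one shows directly that either all images $\phi(x_ix_i^*\otimes y_jy_j^*)$ form a family of mutually orthogonal rank-one projections, or (only when $mn=2k$) they do so after the affine correction $X\mapsto(\tr X/k)I_{mn}-X$. This is proved via a careful case analysis ($mn\neq 2k$; $mn=2k$ with $m,n\leq 3$; $mn=2k$ with $\max\{m,n\}\geq 4$) built on the interlacing/block-decomposition Lemma~\ref{4.1} and the orthogonality Lemma~\ref{le1}. You flag this as ``the main obstacle'' and describe a plan to ``force these data to be mutually consistent'' and then extract preservation of the transition probabilities $|\langle x,u\rangle|^2|\langle y,v\rangle|^2$ so as to run a Wigner--Uhlhorn argument. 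That is not substantiated: $W_k$ of $\phi$ on simple tensors records only top-$k$/bottom-$k$ eigenvalue sums, and it is far from clear that these determine the pairwise overlaps of the image projections across different product bases. The paper avoids this difficulty entirely---having established orthogonality of the image projections for each fixed product basis, it passes from there to ``$\phi$ preserves the whole spectrum on Hermitian simple tensors,'' and then cites the already-classified eigenvalue-preservers of tensor products (\cite[Theorem~3.2]{FHLS}) rather than reconstructing a unitary via transition probabilities. You would also need the connectedness/continuity argument (via $f(x,y)=|\det\phi(xx^*\otimes yy^*)|$) to globalize the dichotomy when $mn=2k$; your proposal mentions the need to ``isolate'' that case but gives no mechanism for showing the choice of {\bf (1)} versus {\bf (2)} cannot vary with the basis. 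In short, the skeleton you outline is compatible with the truth of the theorem, but as written it is a strategy statement with the decisive lemma and its globalization missing, and the alternative Wigner-type route you propose for filling it is itself unjustified.
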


The proof of the theorem will be given in the next section. We will use the
following properties of the $k$-numerical range; for example, see
\cite{Halmos, Li,MF, PierceWatkins}.

\begin{proposition}
\label{P1}
Let  $A\in M_n$ and $k\in \{1,\ldots, n-1\}$.
\begin{itemize}
	\item For any $\alpha,\beta\in \mathbb C$, $W_k(A)=\{\alpha\}$   if and only if  $A=\alpha I_n$,  and $W_k(\alpha I_n+\beta A)=\alpha+\beta W_k(A)$.
	\item For any unitary $U\in M_n$, $W_k(UAU^*)=W_k(A)$.
	\item For any $s\times n$ matrix $V$ with $s\ge k$ and $VV^*=I_s$, we have $W_k(VAV^*)\subseteq W_k(A)$.
\item  $W_k(A)\subseteq \mathbb{R}$ if and only if $A$ is Hermitian.
\item If $A\in H_n$ has eigenvalues $\alpha_1\ge\cdots \ge \alpha_n$, then $$W_k(A)=[(\alpha_{n-k+1}+\cdots +\alpha_n)/k,(\alpha_1+\cdots +\alpha_k)/k].$$
    \item   $W_k(\frac{A+A^*}{2})=\re (W_k(A))\equiv \{\re (z): z\in W_k(A)\}.$
\end{itemize}
\end{proposition}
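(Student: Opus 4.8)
The plan is to reduce each of the six assertions to the defining formula $W_k(A)=\{\tr(X^*AX)/k:\ X\in M_{n\times k},\ X^*X=I_k\}$, dealing with the substitution-type statements first and then with the two that carry real content.

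Three of the assertions --- unitary invariance, the compression inequality, and the real-part identity --- follow by a one-line substitution. For $W_k(UAU^*)=W_k(A)$, given $X$ with $X^*X=I_k$ put $Y=U^*X$; then $Y^*Y=I_k$ and $\tr(X^*(UAU^*)X)=\tr(Y^*AY)$, so $W_k(UAU^*)\subseteq W_k(A)$, with equality on replacing $U$ by $U^*$. For the compression statement, if $VV^*=I_s$ with $s\ge k$ and $X$ is $s\times k$ with $X^*X=I_k$, then $Y=V^*X$ satisfies $Y^*Y=X^*VV^*X=I_k$, so $\tr(X^*(VAV^*)X)/k=\tr(Y^*AY)/k\in W_k(A)$. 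For the real-part identity, $\tr(X^*A^*X)=\overline{\tr(X^*AX)}$ gives $\tr(X^*\tfrac{A+A^*}{2}X)/k=\re(\tr(X^*AX)/k)$, and ranging over $X$ yields $W_k(\tfrac{A+A^*}{2})=\re(W_k(A))$. The scaling identity is likewise immediate: $\tr(X^*(\alpha I_n+\beta A)X)/k=\alpha+\beta\,\tr(X^*AX)/k$ since $\tr(X^*X)=k$, whence $W_k(\alpha I_n+\beta A)=\alpha+\beta W_k(A)$ and, taking $\beta=0$, $W_k(\alpha I_n)=\{\alpha\}$.

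Next I would prove the eigenvalue formula for Hermitian $A$, on which the remaining two claims rest. By unitary invariance one may take $A=\Diag(\alpha_1,\dots,\alpha_n)$. For $X$ with $X^*X=I_k$, set $r_i=(XX^*)_{ii}$; then $\tr(X^*AX)=\sum_i\alpha_i r_i$ with $\sum_i r_i=\tr(XX^*)=k$ and $0\le r_i\le 1$ because $XX^*$ is an orthogonal projection of rank $k$. Hence $\tr(X^*AX)/k$ is a convex combination of $\alpha_1,\dots,\alpha_n$ with all weights in $[0,1/k]$; such a combination is maximized by assigning weight $1/k$ to the $k$ largest $\alpha_i$ and minimized by assigning it to the $k$ smallest, giving exactly the endpoints $(\alpha_1+\dots+\alpha_k)/k$ and $(\alpha_{n-k+1}+\dots+\alpha_n)/k$, each attained by taking the columns of $X$ to be the corresponding standard basis vectors. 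Since the set of admissible $X$ is a path-connected Stiefel manifold and $X\mapsto\tr(X^*AX)/k$ is continuous and real-valued, the intermediate value theorem supplies every value in between, so $W_k(A)$ is precisely that closed interval.

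It then remains to prove that $W_k(A)=\{\alpha\}$ implies $A=\alpha I_n$ and that $W_k(A)\subseteq\IR$ implies $A$ Hermitian. Writing $A=H+iG$ with $H,G\in H_n$, the real and imaginary parts of $\tr(X^*AX)/k$ are $\tr(X^*HX)/k$ and $\tr(X^*GX)/k$, so $W_k(A)=\{\alpha\}$ forces $W_k(H)=\{\re\alpha\}$ and $W_k(G)=\{\im\alpha\}$, reducing the first claim to Hermitian matrices with a one-point $k$-numerical range. For such an $H$, the eigenvalue formula forces $\alpha_1+\dots+\alpha_k=\alpha_{n-k+1}+\dots+\alpha_n$; combined with the termwise inequalities $\alpha_i\ge\alpha_{n-k+i}$ ($i=1,\dots,k$), this forces $\alpha_i=\alpha_{n-k+i}$ for each $i$, and chaining these equalities with the monotonicity $\alpha_i\ge\alpha_{i+1}\ge\dots\ge\alpha_{n-k+i}$ (valid since $k<n$) collapses all the $\alpha_i$ to a common value, so $H$, and likewise $G$, is scalar and $A=\alpha I_n$. (Alternatively, one can show that constancy of $v^*Hv$ as $v$ ranges over unit vectors orthogonal to a fixed $(k-1)$-dimensional subspace forces every unit vector to be an eigenvector of $H$.) For the second claim, if $A\in H_n$ then $X^*AX$ is Hermitian, so $\tr(X^*AX)\in\IR$ and $W_k(A)\subseteq\IR$; conversely, if $W_k(A)\subseteq\IR$ then with $A=H+iG$ as above $\tr(X^*GX)=0$ for every admissible $X$, so $W_k(G)=\{0\}$ and the part just proved gives $G=0$, i.e.\ $A$ is Hermitian. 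The only steps with real substance are this equality analysis and the passage from the endpoint values to the full interval in the eigenvalue formula --- the latter topological rather than algebraic; everything else is a single substitution.
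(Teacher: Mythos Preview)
Your proof is correct and self-contained. The paper itself does not supply a proof of this proposition; it simply cites the references \cite{Halmos, Li, MF, PierceWatkins} for these standard properties of the $k$-numerical range. There is therefore nothing to compare against in the paper proper.

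Your argument handles everything cleanly: the substitution items are routine, the Hermitian eigenvalue formula is derived via the diagonal-of-projection constraint together with connectivity of the complex Stiefel manifold and the intermediate value theorem, and the singleton and real-range characterizations are correctly reduced to the Hermitian case and then settled via the eigenvalue formula. The one mild remark is that the optimization step ``weights in $[0,1/k]$ summing to $1$ maximize $\sum\alpha_i w_i$ at the top $k$ eigenvalues'' could be made explicit (it is a trivial linear-programming/rearrangement observation), but this is the standard argument found in the cited sources and no reader will object.
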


To conclude our introduction, let us point out that we consider only the bipartite case, i.e., $M_m\otimes M_n$ with integers $m,n\ge 2$.
Our proofs are rather technical and we are not able to extend them to the  multipartite systems $M_{n_1}\otimes\cdots \otimes M_{n_m}$ with 
$n_1,\ldots, n_m\ge 2$ and $m>2$. Furthermore, one can define the $k$-numerical radius of a square matrix $A\in M_n$ by
$$w_k(A)=\max\{|x|:x\in W_k(A)\}.$$ It would also be interesting to characterize the linear preservers of the $k$-numerical 
radius on the bipartite or multipartite systems. Again, it does not seem easy to apply our proofs to solve this problem.

\baselineskip 16.2pt
\section{Proof of Theorem \ref{T1}}

In the following,
denote by $E_{ij}\in M_n$, $1\le i,j\le n$, the matrix whose
$(i,j)$-entry is equal to one and all
the others are equal to zero.
Two matrices $A,B\in M_n$ are called orthogonal if $AB^*=A^*B=0$. We write $A\bot B$ to indicate that $A$ and $B$ are orthogonal. It is shown in
\cite{LiSemrlSourour} that $A\bot B$ if and only if there are unitary matrices $U,V\in M_n$ such that $UAV=\Diag(\alpha_1,\ldots,\alpha_n)$ and
$UBV=\Diag(\beta_1,\ldots,\beta_n)$ with $\alpha_i,\beta_i\geq 0$ and $\alpha_i\beta_i=0$ for $i=1,\dots,n$. The matrices $A_1,\ldots,A_s$ are said
to be pairwise orthogonal if $A_i^*A_j=A_iA_j^*=0$ for any distinct $i,j\in \{1,\ldots,s\}$. In this case, there are unitary matrices $U,V\in M_n$
such that $UA_iV=D_i$ for $i=1,\dots,s$ with each $D_i$ being nonnegative diagonal matrix and $D_iD_j=0$ for any distinct $i,j\in \{1,\ldots,s\}$. We will need the following lemmas in the proof.
The first lemma was proved in \cite{Li2}.

\begin{lemma}\label{4.1} \cite[Lemma 4.1]{Li2}
Let $k\in \{1,\ldots,n\}$ and suppose
$A \in H_n$ have
diagonal entries $a_1,\dots,a_n$ and eigenvalues $\lambda_1 \ge \cdots \ge \lambda_n$ respectively.
Then
$\sum_{j=1}^k a_{j} = \sum_{j=1}^k \lambda_j$
if and only if
$A = A_1 \oplus A_2$ where $A_1 \in H_k$ has eigenvalues $\lambda_1,\dots,\lambda_k$.
\end{lemma}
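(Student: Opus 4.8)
The plan is to prove the two implications separately. The forward (``if'') implication is immediate: if $A = A_1\oplus A_2$ with $A_1\in H_k$ having eigenvalues $\lambda_1,\ldots,\lambda_k$, then the first $k$ diagonal entries of $A$ are precisely those of $A_1$, so $\sum_{j=1}^k a_j = \tr A_1 = \sum_{j=1}^k\lambda_j$; one notes in passing that, the spectrum of $A$ being the union of the spectra of $A_1$ and $A_2$, the decreasing listing $\lambda_1\ge\cdots\ge\lambda_n$ automatically forces $A_2$ to have eigenvalues $\lambda_{k+1},\ldots,\lambda_n$, so the hypotheses on $A$ are consistent.

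For the ``only if'' implication I would use the equality case of Ky Fan's maximum principle. Let $P = I_k\oplus 0_{n-k}$, the orthogonal projection onto $\span\{e_1,\ldots,e_k\}$, so that $\tr(PAP)$ equals the sum of the first $k$ diagonal entries of $A$, namely $\sum_{j=1}^k a_j = \sum_{j=1}^k\lambda_j$ by hypothesis. Fix an orthonormal eigenbasis $u_1,\ldots,u_n$ of $A$ with $Au_j = \lambda_j u_j$ and put $t_j = \|Pu_j\|^2$; then $t_j\in[0,1]$, $\sum_j t_j = \tr P = k$, and a short computation gives $\tr(PAP) = \sum_j\lambda_j t_j$. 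The inequality $\sum_j\lambda_j t_j\le\sum_{j=1}^k\lambda_j$ (equivalently $\sum_{j=1}^k a_j\le\sum_{j=1}^k\lambda_j$; this also follows from the third and fifth items of Proposition~\ref{P1} applied to $V = [\,I_k\ \ 0\,]$) holds because the linear functional $t\mapsto\sum_j\lambda_j t_j$ is maximized over the set $\{0\le t_j\le1,\ \sum_j t_j = k\}$ at a $0/1$ vertex. For the equality case, write $\lambda_j = \lambda_k + (\lambda_j-\lambda_k)$ and use $\sum_j t_j = k$: the equality $\sum_j\lambda_j t_j = \sum_{j=1}^k\lambda_j$ becomes $\sum_j(\lambda_j-\lambda_k)t_j = \sum_{j:\,\lambda_j>\lambda_k}(\lambda_j-\lambda_k)$, and since the terms with $\lambda_j<\lambda_k$ contribute at most $0$ and those with $\lambda_j>\lambda_k$ contribute at most $(\lambda_j-\lambda_k)$, this forces $t_j = 1$ whenever $\lambda_j>\lambda_k$ and $t_j = 0$ whenever $\lambda_j<\lambda_k$. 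Writing $g = |\{j:\lambda_j>\lambda_k\}|$, this says precisely that $u_1,\ldots,u_g\in\mathrm{range}(P)$ and that every eigenvector of $A$ with eigenvalue strictly below $\lambda_k$ lies in $\ker P = \span\{e_{k+1},\ldots,e_n\}$.

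The last step upgrades these two containments to the block form $A = A_{11}\oplus A_{22}$, i.e.\ to the $A$-invariance of $\mathrm{range}(P)$. Given $v\in\mathrm{range}(P)$, decompose $v = v_> + v_= + v_<$ into its components in the spans of the eigenvectors of $A$ with eigenvalue $>\lambda_k$, $=\lambda_k$, $<\lambda_k$ respectively. Since $v_<$ lies in a subspace contained in $\ker P$, hence orthogonal to $\mathrm{range}(P)\ni v$, we get $\langle v,v_<\rangle = \|v_<\|^2 = 0$, so $v = v_> + v_=$; and since $v_>\in\span\{u_1,\ldots,u_g\}\subseteq\mathrm{range}(P)$, also $v_= = v - v_>\in\mathrm{range}(P)$; therefore $Av = \sum_{\lambda_j>\lambda_k}\lambda_j\langle v,u_j\rangle u_j + \lambda_k v_=\in\mathrm{range}(P)$. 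Thus $\mathrm{range}(P)$ is $A$-invariant, and, $A$ being Hermitian, so is $\span\{e_{k+1},\ldots,e_n\}$, giving $A = A_1\oplus A_2$ with $A_1 = A_{11}\in H_k$. Being $k$-dimensional and $A$-invariant, $\mathrm{range}(P)$ is spanned by $k$ eigenvectors of $A$; it contains the $g$-dimensional span of the eigenvectors with eigenvalue $>\lambda_k$ and meets the span of those with eigenvalue $<\lambda_k$ only in $0$, so the remaining $k-g$ of these basis vectors have eigenvalue exactly $\lambda_k$, and (as $\lambda_{g+1}=\cdots=\lambda_k$) the eigenvalues of $A_1$ are $\lambda_1,\ldots,\lambda_k$, completing the proof. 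The one genuinely delicate point is a repeated eigenvalue with $\lambda_k = \lambda_{k+1}$: then the equality analysis leaves the weights $t_j$ for the eigenvalue-$\lambda_k$ eigenvectors undetermined, so one cannot identify $\mathrm{range}(P)$ with the span of the $k$ dominant eigenvectors and must instead argue via the two containments above; otherwise the argument is routine linear algebra.
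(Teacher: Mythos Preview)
Your proof is correct. The paper does not supply its own argument for this lemma; it simply quotes the result from \cite{Li2}. Your approach---analyzing the equality case of Ky Fan's maximum principle via the weights $t_j = \|Pu_j\|^2$ and then establishing $A$-invariance of $\mathrm{range}(P)$---is the standard one, and you have handled the only subtle point (a possible repeated eigenvalue $\lambda_k = \lambda_{k+1}$, where one cannot simply read off $t_j = 1$ for $j\le k$ and $t_j = 0$ for $j>k$) cleanly by working with the strict-inequality index sets and the decomposition $v = v_{>} + v_{=} + v_{<}$.
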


\begin{lemma}\label{le1}
Let $k\in \{1,\ldots,n\}$ and  $A,B\in H_n$ be positive semidefinite matrices. Suppose
$\tr(A)/k = \max\{x:x\in W_k(A-B)\}$. Then $A\bot B$.
\end{lemma}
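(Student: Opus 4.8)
The plan is to extract information from the hypothesis $\tr(A)/k = \max\{x : x \in W_k(A-B)\}$ by choosing a good $k$-dimensional subspace at which this maximum is attained. Since $A - B$ is Hermitian (both $A,B \in H_n$), Proposition~\ref{P1} tells us that $W_k(A-B) = [\,(\mu_{n-k+1}+\cdots+\mu_n)/k, (\mu_1+\cdots+\mu_k)/k\,]$ where $\mu_1 \ge \cdots \ge \mu_n$ are the eigenvalues of $A-B$. So the hypothesis says $\mu_1 + \cdots + \mu_k = \tr(A)$. Now let $X$ be an $n \times k$ matrix with $X^*X = I_k$ whose columns span an invariant subspace for the top $k$ eigenvalues of $A-B$; then $\tr(X^*(A-B)X) = \mu_1 + \cdots + \mu_k = \tr(A) \ge \tr(X^*AX)$, where the last inequality holds because $A$ is positive semidefinite and $X^*X = I_k$ forces $\tr(X^*AX) \le \tr(A)$ (the eigenvalues of $X^*AX$ are dominated, in the sense of Proposition~\ref{P1}'s compression property, by those of $A$). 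Combining, $\tr(X^*BX) = \tr(X^*AX) - \mu_1 - \cdots - \mu_k + \tr(A) \le 0$; since $B \succeq 0$ this forces $\tr(X^*BX) = 0$, hence $X^*BX = 0$, and therefore $BX = 0$ (as $B \succeq 0$ means $B = C^*C$, so $X^*C^*CX = 0 \Rightarrow CX = 0 \Rightarrow BX = 0$). Simultaneously the inequality $\tr(X^*AX) \le \tr(A)$ must be an equality, which by Lemma~\ref{4.1} (applied after a unitary change of basis sending the columns of $X$ to the first $k$ standard basis vectors) says that $A$ decomposes as $A = A_1 \oplus A_2$ with respect to the splitting $\IC^n = \range(X) \oplus \range(X)^\perp$, and moreover $A_2 = 0$ because $\tr(A_1) = \tr(X^*AX) = \tr(A)$ and $A_2 \succeq 0$. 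Thus $A$ is supported on $\range(X)$ while $B$ kills $\range(X)$.

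Making this precise: choose a unitary $U$ so that $U^* X$ has columns $e_1, \dots, e_k$. Then $U^*AU = A_1 \oplus 0_{n-k}$ with $A_1 \in H_k$, and $U^*BU$ has its first $k$ columns zero, hence (being Hermitian) is of the form $0_k \oplus B_2$ with $B_2 \in H_{n-k}$, $B_2 \succeq 0$. Consequently $(U^*AU)(U^*BU) = 0$ and $(U^*AU)^*(U^*BU) = 0$, which is exactly $A \perp B$ after conjugating back. This gives the conclusion.

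The one point that needs care — and is the main obstacle — is justifying the two inequalities $\tr(X^*AX) \le \tr(A)$ and its equality case precisely via the tools available. The bound $\tr(X^*AX) \le \tr(A)$ for $A \succeq 0$ and $X^*X = I_k$ is standard (extend $X$ to a unitary $[X\ Y]$; then $\tr(X^*AX) + \tr(Y^*AY) = \tr(A)$ and $\tr(Y^*AY) \ge 0$), and this simultaneously identifies the equality case with $Y^*AY = 0$, i.e. $AY = 0$, i.e. $A$ is supported on $\range(X)$ — so one can even bypass Lemma~\ref{4.1} here, though Lemma~\ref{4.1} packages it cleanly. The other subtlety is ensuring the \emph{same} $X$ achieves the top of $W_k(A-B)$ and the relevant equalities; this is automatic since we simply \emph{chose} $X$ to span the top eigenspace of $A - B$, and all the displayed (in)equalities were forced to be equalities along the chain. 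So the argument is short once the compression/trace inequalities are invoked in the right order; there is no genuinely hard step, only the bookkeeping of which inequalities are tight.
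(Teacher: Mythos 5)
Your argument is correct and follows essentially the same route as the paper: both work in the eigenbasis of $A-B$, equate the sum of the top $k$ eigenvalues with $\tr(A)$, and then use the positive semidefiniteness of $A$ and $B$ to force the compression equalities that give the block decompositions $A \sim A_1 \oplus 0_{n-k}$ and $B \sim 0_k \oplus B_1$. (Minor slip: the displayed identity should read $\tr(X^*BX) = \tr(X^*AX) - (\mu_1+\cdots+\mu_k) = \tr(X^*AX) - \tr(A) \le 0$; the stray ``$+\,\tr(A)$'' is a typo, and the subsequent reasoning is as intended.)
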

\begin{proof}
Suppose $U\in M_n$ is a unitary matrix such that $U(A-B)U^*=\Diag(\lambda_1,\ldots,\lambda_n)$ with $\lambda_1\geq \cdots\geq \lambda_n$. Denote the diagonal entries of $UAU^*$ and $UBU^*$ by $a_1,\ldots,a_n$ and $b_1,\ldots,b_n$, respectively. Then $a_i,b_i$ are nonnegative for $i=1,\ldots,n$, since $A\geq 0$ and $B\geq 0$. Now
$$\sum_{i=1}^k(a_i-b_i)=\sum_{i=1}^k\lambda_i=\tr(A)$$
leads to
$$\sum_{i=1}^ka_i=\tr(A), \quad
a_{k+1}=\cdots=a_n=0 \quad \hbox{ and } \quad b_1=\cdots=b_k=0.$$
Using the fact $A\geq 0$ and $B\geq 0$ again,  $UAU^*$ and $UBU^*$ must have the form
$$UAU^*=A_1\oplus 0_{n-k},\quad UBU^*=0_{k}\oplus B_1$$
which means $A\bot B$.
\end{proof}

Denote by $\lambda_1(X) \ge \cdots \ge
\lambda_n(X)$ the eigenvalues of a Hermitian matrix
$X \in M_n$.

\begin{lemma} \label{lem0} Let $k\in \{1,\ldots, mn-1\}$ and $\phi: M_{mn}\rightarrow M_{mn}$ be a linear map satisfying (\ref{E1}). The following conditions hold.
\begin{itemize}
	\item[{\rm (a)}] $\phi(H_{mn})\subseteq H_{mn}$.
\item[{\rm (b)}] $\phi(I_{mn})=I_{mn}$.
\item[{\rm (c)}]  $\phi$ is trace-preserving. Furthermore, if $mn\ne 2k$, then  $\phi(E_{ii}\otimes E_{jj})$ is   positive semidefinite for all $1\leq i\leq m$ and $1\leq j\leq n$.
\end{itemize}
\end{lemma}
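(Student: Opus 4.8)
The plan is to extract structural information about $\phi$ from the single hypothesis $W_k(\phi(A\otimes B)) = W_k(A\otimes B)$ by feeding it well-chosen rank-one tensor inputs and using the elementary properties collected in Proposition \ref{P1}. For part (a), note that every Hermitian matrix in $H_{mn}$ is a linear combination of tensors $A\otimes B$ with $A\in H_m$, $B\in H_n$ (indeed the $E_{ii}\otimes E_{jj}$, $E_{ij}^{\rm sym}\otimes E_{kl}^{\rm sym}$, etc., span $H_{mn}$ over $\IR$), so by linearity it suffices to show $\phi(A\otimes B)\in H_{mn}$ when $A,B$ are Hermitian. For such $A\otimes B$ we have $W_k(A\otimes B)\subseteq\IR$, hence $W_k(\phi(A\otimes B))\subseteq\IR$, and by the Hermitian-characterization bullet of Proposition \ref{P1} this forces $\phi(A\otimes B)\in H_{mn}$.

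For part (b), apply (\ref{E1}) with $A\otimes B = I_m\otimes I_n = I_{mn}$: since $W_k(I_{mn})=\{1\}$, we get $W_k(\phi(I_{mn}))=\{1\}$, and the first bullet of Proposition \ref{P1} ($W_k(X)=\{\alpha\}$ iff $X=\alpha I_n$) gives $\phi(I_{mn})=I_{mn}$. For part (c), first observe that for any $A\otimes B$ the trace is recovered from $W_k$: combining the shift property $W_k(\alpha I + \beta X) = \alpha + \beta W_k(X)$ with the fact that $W_k(X)$ and $W_k(-X^t)$-type reflections determine $\tr X/n$ (concretely, the center of mass of $W_k$ relates to $\tr$; more simply, $\tr X = \tr(\mathrm{something})$ — one uses that $W_k(X) = W_k(\phi(X))$ together with (b) and linearity applied to $X$ and $(\tr X/k)I - X$-type combinations). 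The cleanest route: by (a) $\phi$ preserves $H_{mn}$, and for $A\in H_m$, $B\in H_n$ the sum of the largest and the negative of the smallest endpoints of $W_k$ does not directly give the trace, so instead I would argue that $\phi$ restricted to $H_{mn}$ preserves $W_k$, hence preserves the ordered eigenvalue data $\lambda_1+\cdots+\lambda_k$ and $\lambda_{n-k+1}+\cdots+\lambda_n$ of every Hermitian tensor; applying this to $E_{ii}\otimes E_{jj}$ (a rank-one projection, all of whose $k$-numerical-range data is pinned down) and using linearity and the spanning property, the trace functional on $H_{mn}$ is reconstructed on the span of the tensors, which is all of $H_{mn}$, giving $\tr\phi(X)=\tr X$.

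The main obstacle is the positivity claim in (c): showing $\phi(E_{ii}\otimes E_{jj})\ge 0$ when $mn\ne 2k$. Here I would use Lemma \ref{le1}. Set $P = E_{ii}\otimes E_{jj}$, a rank-one orthogonal projection, so $W_k(P) = [0, 1/k]$. Then $\phi(P)$ is Hermitian with $W_k(\phi(P)) = [0,1/k]$, so $\lambda_1(\phi(P))+\cdots+\lambda_k(\phi(P)) = 1$ and $\lambda_{mn-k+1}(\phi(P))+\cdots+\lambda_{mn}(\phi(P)) = 0$. Writing $\phi(P) = A - B$ with $A = \phi(P)_+\ge 0$ and $B = \phi(P)_-\ge 0$ its positive and negative parts, the condition on the smallest $k$ eigenvalues together with $mn\ne 2k$ should force, via Lemma \ref{le1} applied appropriately (and possibly also to $I_{mn}-\phi(P)$, using (b) to control $\phi$ on the complementary projection $I_{mn}-P$ which has rank $mn-1\ge k$ so its $k$-numerical range is also explicitly known), that $B$ has rank at most $mn-k$ while simultaneously the ``top'' constraint and the trace-preservation $\tr\phi(P) = 1$ squeeze $B$ down to $0$; the inequality $mn\ne 2k$ is exactly what rules out the symmetric escape in which $\phi(P)$ could be $(\tr/k)I - (\text{positive})$. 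I expect the delicate point to be correctly combining the top-$k$ and bottom-$k$ eigenvalue-sum equalities with the trace constraint so that the hypotheses of Lemma \ref{le1} are met on the nose, and verifying that when $mn=2k$ this argument genuinely breaks (which it must, given case (ii) of the theorem).
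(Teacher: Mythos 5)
Your parts (a) and (b) are correct and essentially identical to the paper's argument. Part (c) has a genuine gap, in two places.

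First, the trace-preservation step. You assert that since $\phi$ preserves $W_k$ on Hermitian tensors, it ``preserves the ordered eigenvalue data $\lambda_1+\cdots+\lambda_k$ and $\lambda_{mn-k+1}+\cdots+\lambda_{mn}$,'' and that applying this to rank-one projections and then linearity reconstructs the trace. But when $mn\ne 2k$, the $k$-numerical range of a Hermitian matrix does \emph{not} determine its trace: from $W_k(\phi(E_{ii}\otimes E_{jj}))=[0,1/k]$ you only learn that the top-$k$ eigenvalues sum to $1$ and the bottom-$k$ sum to $0$, leaving the middle $mn-2k$ eigenvalues unconstrained apart from interlacing. So $\tr\phi(E_{ii}\otimes E_{jj})$ is not ``pinned down'' by that data alone, and the spanning/linearity step you invoke has nothing to propagate. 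The paper closes exactly this gap by a summation argument you do not mention: writing $I_{mn}=\sum_{i,j}E_{ii}\otimes E_{jj}$ and using part (b), one gets $\sum_{i,j}\tr\phi(E_{ii}\otimes E_{jj})=\tr\phi(I_{mn})=mn$; combined with the pointwise inequality $\tr\phi(E_{ii}\otimes E_{jj})\ge 1$ (for $mn>2k$, from $\alpha_{k+1},\dots,\alpha_{mn-k}\ge\alpha_{mn-k+1}\ge 0$), and the reversed inequality when $mn<2k$, this forces each trace to equal $1$. The same averaging then yields trace preservation on an arbitrary pair of orthonormal bases, hence on all of $H_{mn}$ and $M_{mn}$ by linearity.

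Second, the positivity step. You propose to apply Lemma \ref{le1} to the decomposition $\phi(P)=\phi(P)_+-\phi(P)_-$, but as stated this is circular: Lemma \ref{le1} needs the hypothesis $\tr(\phi(P)_+)/k=\max W_k(\phi(P))=1/k$, i.e.\ $\tr\phi(P)_+=1$, which is precisely a consequence of what you are trying to prove (it is easy to see $\tr\phi(P)_+\ge 1$ always, with equality iff the middle eigenvalues vanish). In the paper positivity drops out for free from the same summation argument: once one knows $\tr A_{ij}=1$ and $\alpha_{k+1}+\cdots+\alpha_{mn-k}\ge 0$ with equality only if all middle eigenvalues vanish, $A_{ij}\ge 0$ is immediate, with no appeal to Lemma \ref{le1} at this stage at all. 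Your intuition about why $mn=2k$ is the degenerate case is correct, but the proof mechanism you sketch would not close without first obtaining the $\tr\phi(I_{mn})=mn$ constraint.
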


\it Proof. \rm (a) Let $A\in H_m$ and $B\in H_n$. Then
$W_k(\phi(A\otimes B))=W_k(A\otimes B)\subseteq \mathbb{R}$.
By Proposition \ref{P1}, $\phi(A \otimes B) \in H_{mn}$.
Since every $C \in H_{mn}$ is a linear combination of matrices
of the form $A\otimes B$ with $A \in H_m$ and $B \in H_n$,
we see that   $\phi$   maps $H_{mn}$  to $H_{mn}$.

\smallskip
(b) $W_k(\phi(I_{mn})) = W_k(I_{mn})=\{1\}.$  Thus, $\phi(I_{mn})=I_{mn}$.

\smallskip
(c) Let $\alpha_1\ge\cdots \ge \alpha_{mn}$ be eigenvalues of $\phi(E_{ii}\otimes E_{jj})=A_{ij}$. Since $W_k(A_{ij})=W_k(E_{ii}\otimes E_{jj})=[0,1/k]$,
we have $\alpha_1+\cdots + \alpha_k=1$ and $\alpha_{mn-k+1}+\cdots + \alpha_{mn}=0.$
If $mn=2k$, then $\tr A_{ij}=1+0=1$.

If $mn>2k$, then $\alpha_{k+1},\ldots,\alpha_{mn-k}\ge\alpha_{mn-k+1}\ge  0$ and,
thus, $\tr(A_{ij})\ge 1$. Moreover, if $\alpha_{mn-k}>0$, then $\tr(A_{ij})>1$.
On the other hand, we have
$$mn=\tr (I_{mn})=\tr\left(\phi(I_{mn})\right)
=\tr\left(\phi(\sum_{i,j} E_{ii}\otimes E_{jj})\right)
=\tr\left(\sum_{i,j}A_{ij}\right)\ge mn.$$
This yields that $A_{ij}$ is a positive semidefinite matrix with trace one.

Similarly, if $mn<2k$, then $\alpha_{mn-k+1}+\cdots+\alpha_{k}\ge 0$ and,
thus, $\tr(A_{ij})\le 1$.
Therefore, $$mn=\tr (I_{mn})=\tr(\phi(I_{mn}))
=\tr\left(\phi(\sum_{i,j} E_{ii}\otimes E_{jj})\right)
=\tr\left(\sum_{i,j}A_{ij}\right)\le mn,$$
which yields that $A_{ij}$ is a positive semidefinite matrix with trace one.

We can apply the same argument to  show that for any
orthonormal  bases $\{x_1, \ldots, x_m\}\subseteq \mathbb{C}^m$  and
$\{y_1, \ldots, y_n\}\subseteq \mathbb{C}^n$,
$\tr(\phi(x_ix_i^* \otimes y_jy_j^*)) = 1$.
Thus, $\phi$ is trace preserving for all Hermitian $A \otimes B$
and, hence, for all matrices in $M_{mn}$.
\qed

\begin{lemma} \label{lem1}  Let $k\in\{2,\ldots,N-2\}$ and $X,Y\in H_N$  with
$W_k(X)=W_k(Y)=[0,1/k]$  and $W_k(X+Y)=[0,2/k].$ If $$ X-Y = \Diag( 1 - (k-1)a, a, \ldots,a, -1-(k-1)a)$$ with $a \in [-1/k,1/k]$,
then
$$ X  = \Diag(1-(k-1)d,d, \ldots,d,-(k-1)d) \ \hbox{ and } \
 Y  = \Diag(-(k-1)d,d, \ldots,d,1-(k-1)d)$$
with $d \in \{0, 1/k\}$
so that $ X-Y  = \Diag(1, 0,\ldots,0,-1)$.
\end{lemma}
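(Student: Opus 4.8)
The plan is to prove the statement in three stages: first show that the parameter $a$ must be $0$, then pin down the diagonal entries of $X$ and $Y$, and finally show $X$ and $Y$ are diagonal and $d\in\{0,1/k\}$. Write $s_k(C)=\sum_{j=1}^k\lambda_j(C)=\max\{\tr(PC):P=P^2=P^*,\ \rank P=k\}$, the Ky Fan functional. The hypotheses say $s_k(X)=s_k(Y)=1$ and $s_k(X+Y)=2$, and since $s_k$ is subadditive the equality $s_k(X+Y)=s_k(X)+s_k(Y)$ forces a rank-$k$ orthoprojection $P$ with $\tr(PX)=\tr(PY)=1$. A projection attaining the maximum in $s_k(X)$ has range equal to a sum of eigenspaces of $X$, hence commutes with $X$; the same $P$ commutes with $Y$, so it commutes with $D:=X-Y=\Diag(1-(k-1)a,a,\dots,a,-1-(k-1)a)$, and $\tr(PD)=\tr(PX)-\tr(PY)=0$. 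Now I split into cases according to which of $1-(k-1)a$, $a$, $-1-(k-1)a$ coincide. If $a\notin\{0,\pm1/k\}$ all three are distinct, so $P=p_1E_{11}\oplus P_0\oplus p_NE_{NN}$ with $p_1,p_N\in\{0,1\}$ and $P_0$ a projection on $\IC^{N-2}$; then $\rank P=k$ and $\tr(PD)=0$ give $(p_1-p_N)+ak(1-p_1-p_N)=0$, which has no solution with $p_1,p_N\in\{0,1\}$ when $a\ne0$. If $a=1/k$ (resp. $a=-1/k$) then $D$ has exactly two distinct eigenvalues, and the same two identities force the rank of $P$ on the one-dimensional eigenspace of $D$ to equal $1/2$, which is absurd. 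Hence $a=0$ and $D=D_0:=\Diag(1,0,\dots,0,-1)$.

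Next I determine the diagonals. By Proposition \ref{P1}, every sum of $k$ diagonal entries of a Hermitian matrix $Z$ lies in $[k\min W_k(Z),k\max W_k(Z)]$, so for $X$ and $Y$ this interval is $[0,1]$. Since $X-Y=D_0$, the diagonal entries satisfy $y_{ii}=x_{ii}$ for $2\le i\le N-1$, $y_{11}=x_{11}-1$, $y_{NN}=x_{NN}+1$. Take the $k$-subset $\{N\}\cup S$ with $S\subseteq\{2,\dots,N-1\}$, $|S|=k-1$: choosing $S$ to carry the $k-1$ largest of $x_{22},\dots,x_{N-1,N-1}$ and testing against $Y$ gives $x_{NN}+m\le0$, while choosing the $k-1$ smallest and testing against $X$ gives $x_{NN}+\mu\ge0$, where $m\ge\mu$ are the largest and smallest sums of $k-1$ of the numbers $x_{22},\dots,x_{N-1,N-1}$. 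Thus $m=\mu$, and because $k-1\le N-3$ (this is where $k\le N-2$ is used) it follows that $x_{22}=\cdots=x_{N-1,N-1}=:d$ and $x_{NN}=-(k-1)d$. Testing $\{1\}\cup S$ against $X$ and against $Y$ the same way gives $x_{11}=1-(k-1)d$. Finally, the sum of the $k$ largest (resp. smallest) diagonal entries of $X$ cannot exceed $s_k(X)=1$ (resp. fall below $\lambda_{N-k+1}(X)+\cdots+\lambda_N(X)=0$), and a short computation with these entries shows this forces $d\in[0,1/k]$.

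To finish, observe that for $d\in[0,1/k]$ there are $k$-subsets $T$ (namely $T=\{1\}\cup S$ with $S\subseteq\{2,\dots,N-1\}$, $|S|=k-1$, when $d\in(0,1/k)$, and an easy variant when $d\in\{0,1/k\}$) for which $\sum_{i\in T}x_{ii}$ equals the sum of the $k$ largest diagonal entries of $X$, which is $1=s_k(X)$. Conjugating by the permutation sending $T$ to $\{1,\dots,k\}$ and applying Lemma \ref{4.1}, $X$ is block diagonal with respect to $(T,T^c)$; letting $T$ range over all such subsets, and using the analogous minimal subsets $\{N\}\cup S'$ coming from $\sum_{i\in T}x_{ii}=0$ to annihilate the entries $x_{iN}$, every off-diagonal entry of $X$ vanishes. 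Hence $X=\Diag(1-(k-1)d,d,\dots,d,-(k-1)d)$ and $Y=X-D_0=\Diag(-(k-1)d,d,\dots,d,1-(k-1)d)$, so $X+Y=\Diag(1-2(k-1)d,2d,\dots,2d,1-2(k-1)d)$ has eigenvalues $2d$ with multiplicity $N-2$ and $1-2(k-1)d$ with multiplicity $2$. Computing $\max W_k(X+Y)$ and $\min W_k(X+Y)$ directly, the requirement $W_k(X+Y)=[0,2/k]$ holds if and only if $d\in\{0,1/k\}$, and in both cases $X-Y=D_0=\Diag(1,0,\dots,0,-1)$, as required.

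I expect the main obstacle to be the case analysis in the first stage — using that $P$ commutes with $D$ to reduce $P$ to block-diagonal form and then pushing the rank and trace constraints to a contradiction (the point that a maximizer for $s_k(X)$ automatically commutes with $X$ is the key lever there) — together with the combinatorial bookkeeping in the last stage, where one must select enough maximal and minimal $k$-subsets to force all off-diagonal entries to vanish; it is precisely this step that makes the hypothesis $k\le N-2$ indispensable.
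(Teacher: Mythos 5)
Your proof is correct, and stages two and three run essentially parallel to the paper's argument (both culminating in an application of Lemma \ref{4.1} followed by the eigenvalue computation for $X+Y$), but your first stage is genuinely different. The paper never proves $a=0$ as a preliminary step: instead it observes that for \emph{any} $a$ and \emph{any} $k$-subset $T\subseteq\{1,\dots,N-1\}$ containing the index $1$, the diagonal of $X-Y$ restricted to $T$ sums to $(1-(k-1)a)+(k-1)a=1$, so the three one-line inequalities $\sum_T x_{ii}\le 1$, $\sum_T y_{ii}\ge 0$, $\sum_T x_{ii}-\sum_T y_{ii}=1$ immediately force $\sum_T x_{ii}=1$ and $\sum_T y_{ii}=0$ for all such $T$; the value $a=0$ then falls out at the very end once $X$ and $Y$ have been pinned down. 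You instead invoke equality in the subadditivity of the Ky Fan $k$-functional to produce a rank-$k$ projection $P$ that simultaneously maximizes $\tr(P\,\cdot\,)$ against $X$, $Y$, and hence commutes with $D=X-Y$, and you rule out $a\ne 0$ by a clean case analysis on the block structure of $P$. That lemma (equality in $s_k(X+Y)\le s_k(X)+s_k(Y)$ forces a common commuting maximizer) is a nice, reusable piece of machinery and your case analysis on $p_1,p_N\in\{0,1\}$ and the two-eigenvalue degenerate cases $a=\pm 1/k$ is tight. The trade-off is length: the paper's telescoping-sum observation gets the same information in one line and lets everything be read off from Lemma \ref{4.1} at once, whereas your route needs the separate commuting-projection argument before the diagonal bookkeeping can even begin, since in stage two you rely on $a=0$ to write $y_{11}=x_{11}-1$, $y_{NN}=x_{NN}+1$, $y_{ii}=x_{ii}$ otherwise. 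Both arguments use the hypothesis $k\le N-2$ in the same two places (to force the interior diagonal entries equal, and to kill the interior off-diagonals), so no generality is gained or lost.
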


\it Proof. \rm
 Suppose $X$ has diagonal entries $x_1, \dots, x_{N}$ and $Y$ has diagonal entries $y_1, \dots, y_{N}$.
Then for any $1 = i_1 < \ldots< i_k \le N-1$,
we have
\begin{eqnarray*}
 \sum_{t=1}^kx_{i_t}   \leq  1,\quad
 \sum_{t=1}^ky_{i_t}  \geq  0, \quad
 \sum_{t=1}^kx_{i_t}-\sum_{t=1}^ky_{i_t}  =  1,
\end{eqnarray*}
which imply   $x_{i_1}+\cdots+x_{i_k}=1$ equal to the
sum of the $k$ largest eigenvalues of $X$ and $y_{i_1} +\cdots + y_{i_k}  = 0$ equal to the
sum of the $k$ smallest eigenvalues of $Y$.
Thus, applying Lemma \ref{4.1},
$x_1$ is the largest eigenvalue of $X$,  $x_2 = \cdots = x_{N-1}$
are the second largest eigenvalue of $X$,
  $y_1$ is the smallest eigenvalue of $Y$, and $y_2 = \cdots = y_{N-1}$
are the second smallest eigenvalue of $Y$. Moreover,
 $X$ and $Y$ have the form
$$X = \Diag(1-(k-1)\tilde{x},\tilde{x},\ldots,\tilde{x},-(k-1)\tilde{x}), \quad Y = \Diag(-(k-1)\tilde{y},\tilde{y},\ldots,\tilde{y},1-(k-1)\tilde{y})$$ with $\tilde{x}, \tilde{y} \in [0, 1/k].$
Hence, $X+Y = \Diag(1-(k-1)\tilde{x}-(k-1)\tilde{y}, \tilde{x}+\tilde{y}, \ldots, \tilde{x}+\tilde{y}, 1-(k-1)\tilde{x}-(k-1)\tilde{y})$
satisfies $W_k(X+Y) = [0, 2/k]$. So,
either (a) $k(\tilde{x}+\tilde{y}) = 2$, which implies that $\tilde{x} = \tilde{y} = 1/k$, or
(b) $k(\tilde{x}+\tilde{y}) = 0$, which implies that $\tilde{x} = \tilde{y} =0$.
\qed

 \begin{lemma}\label{le4}
Let $2\leq k\leq mn/2$ be an integer and $\phi: M_{mn}\rightarrow M_{mn}$ be a linear map satisfying (\ref{E1}).  Then for any orthonormal bases
$\{x_1, \dots, x_m\} \subseteq \IC^m$ and $\{y_1, \dots, y_n\} \subseteq \IC^n$, either

\begin{itemize}
\item[{\bf (1)}] there is a unitary $U \in M_{mn}$ such that
$U^*\phi(x_ix_i^*\otimes y_jy_j^*)U = x_ix_i^*\otimes y_j y_j^*$
for all $i = 1, \dots, m$, and $j = 1, \dots, n$, or
\item[{\bf (2)}] $mn = 2k$ and there is a unitary $U \in M_{mn}$ such that
$U^*\phi(x_ix_i^*\otimes y_jy_j^*)U = I_{mn}/k - x_ix_i^* \otimes y_jy_j^*$
for $i = 1, \dots, m$, and $j = 1, \dots, n$.
\end{itemize}
\end{lemma}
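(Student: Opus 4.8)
The plan is to reduce to repeated applications of Lemmas \ref{le1} and \ref{lem1}, using the fact that the rank-one projections $P_{ij} := x_ix_i^*\otimes y_jy_j^*$ form a complete system of $mn$ pairwise orthogonal rank-one projections summing to $I_{mn}$. First I would set $A_{ij} := \phi(P_{ij})$; by Lemma \ref{lem0}(a),(b),(c) each $A_{ij}\in H_{mn}$ is positive semidefinite (when $mn\ne 2k$) with $W_k(A_{ij})=[0,1/k]$, trace one, and $\sum_{i,j}A_{ij}=I_{mn}$. Since $P_{ij}+P_{i'j'}$ is a rank-two projection (for $(i,j)\ne(i',j')$) with $W_k(P_{ij}+P_{i'j'})=[0,2/k]$ (as $k\ge 2$ and $k\le mn/2$), linearity of $\phi$ and (\ref{E1}) give $W_k(A_{ij}+A_{i'j'})=[0,2/k]$. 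Likewise $W_k(P_{ij}-P_{i'j'})=[-1/k,1/k]$ forces $W_k(A_{ij}-A_{i'j'})=[-1/k,1/k]$, and more generally, for any subset $S$ of pairs with $|S|=\ell\le k$, the projection $\sum_{(i,j)\in S}P_{ij}$ has $k$-numerical range $[0,\ell/k]$, hence so does $\sum_{(i,j)\in S}A_{ij}$. Applying Lemma \ref{le1} with $A=A_{ij}$, $B=A_{i'j'}$ (after first checking the maximum of $W_k(A_{ij}-A_{i'j'})$ equals $\tr(A_{ij})/k=1/k$, which holds when $mn\ne 2k$ so that $A_{ij},A_{i'j'}\ge 0$) I would conclude $A_{ij}\perp A_{i'j'}$ for all distinct pairs.

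Once pairwise orthogonality is established, there are unitary $U,V$ simultaneously diagonalizing all $A_{ij}$ with nonnegative diagonal blocks having disjoint supports; since each $A_{ij}$ is Hermitian positive semidefinite this actually gives a single unitary $U$ with $U^*A_{ij}U$ diagonal, nonnegative, pairwise disjoint in support, and (using $\sum_{i,j}A_{ij}=I_{mn}$) the supports partition $\{1,\dots,mn\}$. So after conjugating by $U$ we may assume each $A_{ij}=\Diag(d^{(ij)}_1,\dots,d^{(ij)}_{mn})$ is a nonnegative diagonal matrix, these diagonals have disjoint supports, and each has trace one with $W_k = [0,1/k]$. The condition $W_k(A_{ij})=[0,1/k]$ on a nonnegative diagonal with trace one says the $k$ largest entries sum to $1$ and the $k$ smallest sum to $0$. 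Here I would invoke Lemma \ref{lem1}: writing $A_{ij}-A_{i'j'}$ in the canonical diagonal form and matching it against the hypothesis $\Diag(1-(k-1)a,a,\dots,a,-1-(k-1)a)$ — which is the shape forced precisely when the two supports are singletons — I would get that each support is a single index and $A_{ij}$ is the rank-one projection on that index. To line this up, I would choose the ordering of $U$'s columns so that pair $(1,1)$ goes first; Lemma \ref{lem1} then pins down $A_{11}$ and $A_{i'j'}$ as $\Diag(1,0,\dots)$ and $\Diag(0,\dots,0,1)$ type matrices, and running over all pairs shows every $A_{ij}$ is a coordinate rank-one projection, with the $mn$ of them exhausting all coordinates, i.e.\ case \textbf{(1)}.

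The case $mn=2k$ is where the dichotomy appears and is the main obstacle. There $A_{ij}$ need not be positive semidefinite, only Hermitian with $W_k(A_{ij})=[0,1/k]$ and trace one, so Lemma \ref{le1} does not directly apply; instead, the eigenvalue description in Proposition \ref{P1} gives $\lambda_1(A_{ij})+\cdots+\lambda_k(A_{ij})=1$ and $\lambda_{k+1}(A_{ij})+\cdots+\lambda_{2k}(A_{ij})=0$, hence $\tr A_{ij}=1$ automatically but the bottom $k$ eigenvalues can be negative. I would handle this by considering, for a fixed pair, the matrix $I_{mn}/k - A_{ij}$, which also is Hermitian with $W_k$ equal to $[0,1/k]$ (since $W_k(I_{mn}/k - X)=1/k - W_k(X)$ reversed), and is positive semidefinite exactly when $\lambda_{2k}(A_{ij})\ge -\,0$, i.e.\ when the top eigenvalue is $\le 1$; then analyzing the orthogonality relations among the $A_{ij}$ and among the $I_{mn}/k-A_{ij}$ via Lemmas \ref{le1} and \ref{lem1} forces, consistently across all $(i,j)$, either every $A_{ij}$ to be a rank-one projection (case \textbf{(1)}) or every $A_{ij}$ to equal $I_{mn}/k$ minus a rank-one projection (case \textbf{(2)}); the consistency — that one cannot mix the two forms among different pairs — comes from the constraint $\sum_{i,j}A_{ij}=I_{mn}$ together with the trace-one and $W_k$ conditions on pairwise sums. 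Carefully closing this consistency argument, and verifying that the required "maximum equals $\tr/k$" hypothesis of Lemma \ref{le1} can be arranged (possibly after replacing some $A_{ij}$ by $I_{mn}/k-A_{ij}$), is the delicate part of the proof.
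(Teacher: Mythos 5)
Your plan for $mn\ne 2k$ follows the paper's strategy (establish that the $A_{ij}:=\phi(x_ix_i^*\otimes y_jy_j^*)$ are pairwise orthogonal via Lemma~\ref{le1}, then use PSD\,$+$\,trace one\,$+$\,$W_k=[0,1/k]$ to conclude they are orthogonal rank-one projections), but there is a concrete gap in the orthogonality step. You assert that $W_k(P_{ij}\pm P_{i'j'})$, with $P_{ij}=x_ix_i^*\otimes y_jy_j^*$, transfers to $W_k(A_{ij}\pm A_{i'j'})$ ``by linearity of $\phi$ and (\ref{E1})''. Hypothesis (\ref{E1}) only controls $W_k(\phi(X))$ for $X=A\otimes B$, and $P_{ij}\pm P_{i'j'}$ is a tensor product only when $i=i'$ or $j=j'$. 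When $i\ne i'$ and $j\ne j'$, linearity gives $\phi(P_{ij}-P_{i'j'})=A_{ij}-A_{i'j'}$, but nothing in (\ref{E1}) lets you evaluate $W_k$ of that difference, and $W_k$ is not a linear functional. The paper avoids this by passing to $(E_{ii}-E_{rr})\otimes(E_{jj}+E_{ss})$, which \emph{is} a tensor product with $k$-numerical range $[-2/k,2/k]$ for $2\le k\le mn-2$, and applying Lemma~\ref{le1} to the PSD matrices $\phi(E_{ii}\otimes(E_{jj}+E_{ss}))$ and $\phi(E_{rr}\otimes(E_{jj}+E_{ss}))$; orthogonality of those sums then forces $A_{ij}\perp A_{rs}$. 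Your ``for any subset $S$ with $|S|=\ell\le k$'' claim has the same defect: $\sum_{(i,j)\in S}P_{ij}$ is a tensor product only when $S$ is a Cartesian rectangle.

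For $mn=2k$ your sketch correctly identifies the difficulty --- the $A_{ij}$ need not be PSD, Lemma~\ref{le1} fails, and the answer can be $I_{mn}/k$ minus a rank-one projection --- but it does not close the argument, and you say as much. The paper's treatment of this half is the bulk of the proof: for $\max\{m,n\}\le 3$ it analyses $A_i=\phi(E_{ii}\otimes(E_{11}-E_{22}))$ through Lemma~\ref{4.1} and Lemma~\ref{lem1} and then uses $\sum_{i,j}\phi(E_{ii}\otimes E_{jj})=I$ to force a \emph{global} choice between the two forms; for $\max\{m,n\}\ge 4$, Claims~3--5 show that if some $\phi(E_{ii}\otimes E_{jj})$ fails to be PSD then $\lambda_1(\phi(E_{ii}\otimes E_{jj}))=1/k$ for all $i,j$, so $\psi(X)=(\tr X/k)I_{mn}-\phi(X)$ has $\psi(E_{ii}\otimes E_{jj})\ge0$ and the first case applies to $\psi$. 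Your parenthetical ``possibly after replacing \emph{some} $A_{ij}$ by $I_{mn}/k-A_{ij}$'' is misleading: since $\phi$ is a single linear map, the replacement must be global (via $\psi$), and proving that the ``all or nothing'' dichotomy holds is precisely the technical content you leave open. So the $mn=2k$ half should be regarded as unproven in your proposal.
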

The proof of this lemma is rather technical. We will present it in the last part of this paper.

\vspace{0,2cm}

Denote by $\sigma(X)$ the spectrum of $X\in M_n$. The following example is useful in our proof.
\begin{example}\em \label{Ex1}  Suppose $m, n \ge 3$.
Let
$A = X \oplus O_{m-3}$ and  $B = X \oplus O_{n-3}$ with
$X= \begin{bmatrix}0 & 3 & 0 \cr 0 & 0 & 1 \cr 0 & 0 & 0 \cr\end{bmatrix}$.
Then $A \otimes B$ is unitarily similar to
$$ O_{mn-7}\oplus \begin{bmatrix}0 & 3  \cr 0 & 0 \end{bmatrix}
\oplus \begin{bmatrix}0 & 3  \cr 0 & 0 \end{bmatrix}
\oplus \begin{bmatrix}0 & 9 & 0 \cr 0 & 0 & 1  \cr 0 & 0 & 0 \cr\end{bmatrix},$$
and $A \otimes B^t$ is unitarily similar to
$$ O_{mn-7}
\oplus \begin{bmatrix}0 & 1  \cr 0 & 0 \end{bmatrix}
\oplus \begin{bmatrix}0 & 9  \cr 0 & 0 \end{bmatrix}
\oplus \begin{bmatrix}0 & 3 & 0 \cr 0 & 0 & 3 \cr 0 & 0 & 0 \cr\end{bmatrix}.$$
Consequently,
\begin{eqnarray*}
\sigma((A\otimes B+(A\otimes B)^*)/2)&=& \{-\sqrt{41/2},
   -3/2,
   -3/2,
   0,
        \ldots,
         0,
    3/2,
    3/2,
    \sqrt{41/2}\},
\\
    \sigma((A\otimes B^t+(A\otimes B^t)^*)/2)&=&\{ -9/2,
   -\sqrt{9/2},
   -1/2,
         0,
         \ldots,
    0,
    1/2,
     \sqrt{9/2},
    9/2\}.
    \end{eqnarray*}
Applying Proposition \ref{P1}, one see that
$\re (W_k(A\otimes B))\ne \re (W_k(A\otimes B^t))$,
and hence $W_k(A\otimes B) \ne  W_k(A\otimes B^t) $ for any $k\in \{1,\ldots,mn-1\}$.
\end{example}

\medskip
Now we are ready to present the proof of Theorem \ref{T1}.

\noindent{\bf Proof of Theorem \ref{T1}}.

Note that the 1-numerical range is just the classical numerical range.
The case $k=1$ has been obtained in \cite[Theorem 2.1]{FHLS3}. Since $(n-k)W_{n-k}(A)=\tr (A) -kW_k(A)$, by Lemma \ref{lem0}, we have
$$W_k(\phi(A))=W_k(A)\quad \Longleftrightarrow \quad   W_{n-k}(\phi(A))=W_{n-k}(A)\,.$$
Therefore, we can focus our proof  on $2\le k\le mn/2$, with $mn\ge 4$.

Note that
$W_k(X) = W_k(\phi(X))$, $W_k(X) = W_k(X^t)$, $W_k(X) = W_k(U^*XU)$ for any unitary
$U$ and $X$ in $M_{mn}$.
Furthermore, if $A \in M_2$, then $A = U_AA^tU_A$ for some unitary
$U_A$ depending on $A$ so that for any $B \in M_n$, the matrix
$A\otimes B$ is unitarily similar to  $A^t \otimes B$. Thus,
$W_k(A \otimes B) = W_k(A^t\otimes B) = W_k((A^t\otimes B)^t) = W_k(A \otimes B^t)$.
Similarly, if $A \in M_m$ and $B \in M_2$, then
$W_k(A\otimes B) = W_k(A\otimes B^t) = W_k(A^t \otimes B)$.
Combining the above, we get the sufficiency.

For the converse, suppose $W_k(A\otimes B) = W_k(\phi(A\otimes B))$
for all $(A,B) \in M_m\times M_n$.
Suppose $mn\ne 2k$. Then by Lemma \ref{le4}, {\bf (1)} always  holds.
So, for any Hermitian $A \in M_m$ and $B \in M_n$
with spectral decomposition $A = \sum a_i x_i x_i^*$ and $B = \sum b_j y_jy_j^*$,
we see that $\phi(A \otimes B)$ is unitarily similar to
$A\otimes B$. So, $A\otimes B$ and $\phi(A\otimes B)$ always have the
same eigenvalues. Thus, by \cite[Theorem 3.2]{FHLS},
there is a unitary $V$ such that $\phi$ has the form
$A\otimes B \mapsto V^*\varphi(A\otimes B)V$
for any Hermitian $A \in M_m$ and $B \in M_n$, where
$\varphi$ is one of the following forms:
\begin{enumerate}
	\item $A\otimes B  \mapsto  A\otimes B$, 
	\item $A\otimes B  \mapsto A\otimes B^t$,
	\item $A\otimes B  \mapsto A^t \otimes B$,
	\item $A\otimes B  \mapsto A^t \otimes B^t$.
\end{enumerate}
By linearity, the map $\phi$ can only have one of these forms on $M_{mn}$.
However, if $m, n \ge 3$, we see that $\varphi$ cannot be of the form $(2)$ or $(3)$ by Example \ref{Ex1}.
So, $\varphi$ can only be of the form $(1)$ or $(4)$. The desired conclusion holds.

\medskip
Now, suppose $mn = 2k$. We claim that either {\bf (1)} in Lemma \ref{le4} holds for any
choice of orthonormal bases $\{x_1, \dots, x_m\} \subseteq \IC^m$
and $\{y_1, \dots, y_n\} \subseteq \IC^n$, or {\bf (2)} in Lemma \ref{le4} holds for any
choice of orthonormal bases $\{x_1, \dots, x_m\} \subseteq \IC^m$
and $\{y_1, \dots, y_n\} \subseteq \IC^n$. To see this, note that the set
$$S = \{(x,y): x \in \IC^m,   y\in \IC^n, x^*x = 1 = y^*y\}$$
is path connected because the unit spheres in $\IC^m$ and $\IC^n$ are path connected.
Consider the continuous map from $S$ to reals defined by
$f(x,y) \mapsto |\det(\phi(xx^*\otimes yy^*))|$.
If {\bf (1)} holds for a pair of orthonormal bases containing $x$ and $y$,
then $f(x,y) = 0$; if {\bf (2)} holds for a pair of orthonormal bases containing
$x$ and $y$, then $f(x,y) = |\det(I_{mn}/k-E_{11}\otimes E_{11})| = (1/k)^{mn-1}(1-1/k)$.
It follows that either $f(x,y) = 0$ for all $(x,y) \in S$ so that
{\bf (1)} always holds, or $f(x,y) = (1/k)^{mn}(1-1/k)$ for all $(x,y) \in S$
so that {\bf (2)} always holds.

\medskip
If {\bf (1)} holds for all orthonormal bases
$\{x_1, \dots, x_m\} \subseteq \IC^m$ and
$\{y_1, \dots, y_n\} \subseteq \IC^n$,
then, by the argument in the case of $mn\ne 2k$, we see that
$\phi$ has the desired form. If {\bf (2)}  holds
for all orthonormal bases
$\{x_1, \dots, x_m\} \subseteq \IC^m$ and
$\{y_1, \dots, y_n\} \subseteq \IC^n$, then
compose $\phi$ with the map $X \mapsto (\tr X)I/k - X$ so that the
resulting map satisfies {\bf (1)}. The result follows.\qed

\medskip
Finally, we give the proof of Lemma \ref{le4}.

\noindent{\bf Proof of Lemma \ref{le4}}.

  By Lemma \ref{lem0} (a),
$\phi$ maps Hermitian matrices to Hermitian matrices.
We may focus on the case that  $x_i x_i^* = E_{ii}$ for $i = 1, \dots, m$
and $y_jy_j^* = E_{jj}$ for $j = 1, \dots, n$. Otherwise, replace
$\phi$ by the map
$\tilde \phi(A\otimes B) = \phi(V_1AV_1^* \otimes V_2BV_2^*)$
so that $V_1 \in M_m$ and $V_2 \in M_n$
are unitary matrices satisfying $V_1x_ix_i^*V_1^* = E_{ii}$ for $i = 1, \dots, m$,
and $V_2y_jy_j^*V_2^* = E_{jj}$ for $j = 1, \dots, n$.

We  divide the proof into three cases, namely

\medskip\centerline{
(a) $mn \ne 2k$, \quad (b) $mn = 2k$,  $m\leq 3$ and $n\leq 3$
\quad and \quad (c) $mn = 2k$, $m\geq 4$ or $n\geq 4$.}
\subsection{The case $mn \ne {2k}$}

\-

\noindent
{\em \underline{Claim 1}.}
There exists a unitary $U\in M_{mn}$ such that
$$\phi(E_{ii}\otimes E_{jj})=U(E_{ii}\otimes E_{jj})U^*$$
for all $1\leq i\leq m$ and $1\leq j\leq n$.

\vspace{0,2cm}

It suffices to prove that
\begin{equation}
\label{E2.1}
\phi(E_{ii}\otimes E_{jj})\bot \phi(E_{rr}\otimes E_{ss})
\end{equation}
for all pairs $(i,j)\ne (r,s)$ with $1\leq i,r\leq m$ and $1\leq j,s\leq n$.

First, suppose that $i=r$ or $j=s$. Considering
$$W_k(\phi(E_{ii}\otimes E_{jj})-\phi(E_{rr}\otimes E_{ss}))=W_k( E_{ii}\otimes E_{jj}-E_{rr}\otimes E_{ss})=[-1/k,1/k],$$
applying Lemma \ref{lem0} and Lemma \ref{le1}, we conclude that (\ref{E2.1}) holds.

Now, let $i\ne r$ and $j\ne s$. We may assume that $2\leq k\leq mn/2\leq mn-2$, we consider
$$W_k(\phi(E_{ii}\otimes (E_{jj}+E_{ss}))-\phi(E_{rr}\otimes(E_{jj}+ E_{ss})))=W_k( E_{ii}\otimes (E_{jj}+E_{ss})-E_{rr}\otimes (E_{jj}+E_{ss}))=[-2/k,2/k].$$
Applying Lemma \ref{le1} again, it follows that $\phi(E_{ii}\otimes (E_{jj}+E_{ss}))\bot\phi(E_{rr}\otimes(E_{jj}+ E_{ss}))$. Hence, we have (\ref{E2.1}).


\subsection{The case $mn = 2k$, $m\le 3 $ and $n\le  3$}

 \-

Since $mn = 2k$ is an even integer, without loss of generality we may assume that $n$ is even.
So it suffices to consider the cases when $n=2$ and $m\in \{2,3\}$.

\-

\noindent
{\it \underline{Claim 2}.}  Let $m\in \{2,3\}$ and
$A_i = \phi(E_{ii} \otimes (E_{11}-E_{22})) \in M_m \otimes M_2$ for $i = 1,\ldots,m$.
Then there is a unitary $U \in M_{2m}$ such that
$\phi(A_i) = U(E_{ii} \otimes (E_{11}-E_{22}))U^*$ for $i = 1,\ldots,m$.

\-

We only need to show that $A_1, \ldots, A_m$ are mutually orthogonal
and each $A_i$ has eigenvalues $1, -1, 0,\ldots,0$.
Note that $W_k(A_i) = [-1/k,1/k]$ for $i = 1,\ldots,m$.
So $\lambda_1(A_i) + \cdots + \lambda_k(A_i) = 1$
and $\lambda_{k+1}(A_i) + \cdots + \lambda_{2m}(A_i) = -1$.
Since $W_k(A_1+A_2) = [-2/k,2/k]$, we see that
$$\sum_{j=1}^k\lambda_j(A_1+A_2) = \sum_{j=1}^k (\lambda_j(A_1) + \lambda_j(A_2)) = 2$$
and
$$\sum_{j=k+1}^{2m}\lambda_j(A_1+A_2) = \sum_{j=k+1}^{2m}(\lambda_j(A_1) + \lambda_j(A_2)) = -2.$$
So, by a unitary similarity and applying Lemma \ref{4.1}, 
we may assume that
$A_1 = B_1 \oplus C_1$ and $A_2=B_2 \oplus C_2$ so that
$B_i$ has eigenvalues $\lambda_1(A_i), \ldots, \lambda_k(A_i)$
and $C_i$ has eigenvalues $\lambda_{k+1}(A_i), \ldots, \lambda_{2m}(A_i)$,
$i = 1, 2$.
As $W_k(A_1-A_2) = [-2/k, 2/k]$, we see that
$(B_1-B_2) \oplus (C_1-C_2)$ has eigenvalues $\gamma_1\ge \cdots \ge\gamma_{2m}$
such that $\gamma_1+\cdots+ \gamma_k = 2$ and
$\gamma_{k+1} + \cdots + \gamma_{2m} = -2$.
Clearly, $\gamma_1, \ldots, \gamma_k$ cannot all come from $B_1-B_2$, else,
$\gamma_1 + \cdots+ \gamma_k = \tr(B_1-B_2) = 0$.
Similarly, $\gamma_1, \ldots, \gamma_k$ cannot all come from $C_1-C_2$.
Now we distinguish two cases.

\vspace{0,2cm}

\noindent
{\it Case 1.} $m=2$. In this case we see that an eigenvalue of $B_1-B_2$ and an eigenvalue of
$C_1 - C_2$ sum up to $\gamma_1 + \gamma_2 = 2$.
Since $\lambda_1(B_1-B_2) \le \lambda_1(A_1) - \lambda_2(A_2)$ and
$$\lambda_1(C_1-C_2) \le \lambda_3(A_1) - \lambda_4(A_2),$$
we have
\begin{eqnarray*}
2 &=& \gamma_1+\gamma_2
\le \lambda_1(A_1) + \lambda_3(A_1) - \lambda_2(A_2) - \lambda_4(A_2) \\
&\le& \lambda_1(A_1) + \lambda_2(A_1) - \lambda_3(A_2) - \lambda_4(A_2) = 2.
\end{eqnarray*}
It follows that
$$\lambda_2(A_1) = \lambda_3(A_1) \quad \hbox{ and } \quad
\lambda_2(A_2) = \lambda_3(A_2).$$
Without loss of generality, assume that
$A_1$  is unitarily similar to a matrix of the form
$$\Diag(1-a,a,a,-1-a) \quad\hbox{ with } \quad a \in [-1/2,1/2].$$
By Lemma \ref{lem1}, we conclude that $a = 0$.

Similarly, we can show that $A_2$ has eigenvalues $1,-1,0,0$.
It is then easy to show that
$A_1, A_2$ are  orthogonal.

 \vspace{0,2cm}

\noindent
{\em Case 2.} $m=3$. We have two subcases.
\vspace{0,2cm}

\noindent
{\it Subcase 2.1.} An eigenvalue of $B_1-B_2$ and two eigenvalues of
$C_1 - C_2$ sum up to $\gamma_1 + \gamma_2 + \gamma_3 = 2$.
Since $\lambda_1(B_1-B_2) \le \lambda_1(A_1) - \lambda_3(A_2)$ and
\begin{eqnarray*}
\lambda_1(C_1-C_2) + \lambda_2(C_1-C_2)
&\le&
\lambda_1(C_1)+\lambda_2(C_1) - \lambda_2(C_2) - \lambda_3(C_2) \\
&=& \lambda_4(A_1) + \lambda_5(A_1) -  \lambda_5(A_2) - \lambda_6(A_2),
\end{eqnarray*}
we have
\begin{eqnarray*}
2 &=& \gamma_1+\gamma_2 + \gamma_3
\le \lambda_1(A_1) + \lambda_4(A_1) + \lambda_5(A_1) - \lambda_3(A_2) - \lambda_5(A_2) -
\lambda_6(A_2) \\
&\le& \lambda_1(A_1) + \lambda_2(A_1) + \lambda_3(A_1)
- \lambda_4(A_2) - \lambda_5(A_2) - \lambda_6(A_2) = 2.
\end{eqnarray*}
It follows that
$$\lambda_2(A_1) = \cdots = \lambda_5(A_1) \quad \hbox{ and } \quad
\lambda_3(A_2) = \lambda_4(A_2).$$

\vspace{0,2cm}

\noindent
{\em Subcase 2.2.} An eigenvalue of $C_1 - C_2$ and two eigenvalues of
$B_1-B_2$ sum up to $\gamma_1 + \gamma_2 + \gamma_3 = 2$.
Then
$$\lambda_3(A_1) = \lambda_4(A_1) \quad \hbox{ and } \quad
\lambda_2(A_2) = \cdots = \lambda_5(A_2).$$

\vspace{0,2cm}

Without loss of generality, assume that
$A_1$  is unitarily similar to a matrix of the form
$$\Diag(1-2a,a,a,a,a,-1-2a) \quad \hbox{ with } \quad a \in [-1/3,1/3].$$
By Lemma \ref{lem1}, we conclude that $a = 0$.

Now, applying the arguments to $A_2$ and $A_3$, we conclude that one of the matrices
$A_2$ and $A_3$, say, $A_2$, has eigenvalues $1, -1, 0,0,0,0$.
Then it follows from Lemma \ref{le1} that $A_1$ and $A_2$ are orthogonal. So,
we may assume that $A_1 = \Diag(1, 0,0,-1,0,0)$
and $A_2 = \Diag(0,1,0,0,-1,0)$. Note that
$$W_3(A_1+A_2-A_3) = W_3(A_1 + A_2 + A_3) = [-1,1].$$
We see that
$A_3 = B_3 \oplus C_3$ such that  $B_3$ has eigenvalues $1-2c,c,c$
and $C_3$ has eigenvalues $c,c,-1-2c$. Now, applying Lemma \ref{lem1} on $A_3$, we conclude that $A_3$ also has eigenvalues $1,-1,0,0,0,0$.
It follows from Lemma \ref{le1} that
$A_1, A_2, A_3$ are mutually orthogonal. Thus, we obtain the claim.

 \-

Using the notation  as in  Claim 2, we see that there is a unitary $U\in M_{2m}$
such that  $A_i = U(E_{ii} \otimes (E_{11}-E_{22}))U^*$.
By Lemma \ref{lem1}, for each $i = 1,\ldots,m$,
\begin{equation}\label{eq1}
\phi(E_{ii}\otimes E_{jj}) = U(E_{ii} \otimes E_{jj})U^*, \quad j = 1,2,
\end{equation}
or
\begin{equation}\label{eq2}
\phi(E_{ii}\otimes E_{jj}) = I_{2m}/k - P_i(E_{ii} \otimes E_{jj})P_i^t, \quad j = 1,2,
\end{equation}
for a suitable permutation matrix $P_i \in M_{2m}$.
Because $\sum_{i,j} \phi(E_{ii}\otimes E_{jj}) = I_{2m}$,
either (\ref{eq1}) holds for all $i = 1,\ldots,m$, or
(\ref{eq2}) holds for all $i = 1,\ldots,m$. In the latter case, the map $\tilde \phi(A)=\tr (A)/kI_{mn}-\phi(A)$ must satisfy (\ref{eq1}). This shows that $P_1 = \cdots = P_m$.


\subsection{The case $mn = 2k$, $m\geq 4$ or $n\geq 4$}

\-

\noindent
Without loss of generality, we assume $m\geq 4$.
If  $A_{ij}=\phi(E_{ii}\otimes E_{jj})$ is positive semidefinite for any
$1\leq i\leq m$ and $1\leq j\leq n$, then
applying the same arguments as in the previous
case, we conclude that $\phi$ satisfies {\bf (1)}.
Now suppose there exist some $i_0$ and $j_0$ such that $A_{i_0j_0}$ has negative eigenvalues. Without loss of generality, we assume $i_0=j_0=1$.

\-

\noindent
{\em \underline{Claim 3}.} There exists some $i\in\{1,\ldots,m\}$ such that $\phi(E_{ii}\otimes I_{n})$ has a  negative eigenvalue.

\-

For $1\leq i\leq m$, we denote the eigenvalues of  $\phi(E_{ii}\otimes I_{n})$ by $a_1(i)\geq a_2(i)\geq\cdots\geq a_{mn}(i)$.  Since $W_k(\phi(E_{ii}\otimes I_{n}))=[0,n/k]$, we have $\sum_{j=1}^ka_j(i)=n$ and $\sum_{j=k+1}^{mn}a_j(i)=0$ for $1\leq i\leq m$.
Suppose $\phi(E_{ii}\otimes I_{n})\geq 0$ for all $1\leq i\leq m$. Since $W_k(\phi(E_{11}+E_{22})\otimes I_n)=[0,2n/k]$, without loss of generality we can assume $\phi((E_{11}+E_{22})\otimes I_n)= \Diag(r_1,\ldots,r_{mn})$
with $r_1\geq r_2\geq \cdots\geq r_{k}\geq r_{k+1}=\cdots=r_{mn}=0$. Let
$\phi (E_{11} \otimes I_n)=(x_{ij})$ and $\phi (E_{22} \otimes I_n)=(y_{ij})$. Then $$\sum_{i=1}^kx_{ii}=\sum_{i=1}^ky_{ii}=n=\sum_{i=1}^ka_i(1)=\sum_{i=1}^ka_i(2).$$
By Lemma \ref{4.1}, 
$\phi (E_{11} \otimes I_n)=
X\oplus 0_k$ and $\phi (E_{22} \otimes I_n)=Y \oplus 0_k$ with $\tr X =\tr Y =n.$
Moreover, $W_k(\phi((E_{11}-E_{22})\otimes I_n))=[-n/k,n/k]$.
Thus, applying Lemma \ref{le1} we have $X\bot Y$. It follows that
$X$ is singular and $a_{k}(1)=a_{k+1}(1)=\cdots=a_{mn}(1)=0$.
Suppose $V\in M_{mn}$ is a unitary matrix such that
$$ \Diag(a_1(1),\ldots,a_{k-1}(1),0,\ldots,0)=V \phi (E_{11} \otimes
I_n)V^*=\sum_{j=1}^nV\phi(E_{11}\otimes E_{jj})V^*.$$
Denote the diagonal entries of $V\phi(E_{11}\otimes E_{jj})V^*$ by $d_{1}(j),\ldots,d_{mn}(j)$. Then $$n=\sum_{i=1}^{k-1}a_{i}(1)+a_{s}(1)=\sum_{j=1}^n(\sum_{i=1}^{k-1}d_{i}(j)+d_{s}(j))$$
for every $s\in\{k,\ldots,mn\}$
and $W_k(\phi(E_{11}\otimes E_{jj}))=[0,1/k]$ ensures that $\sum_{i=1}^{k-1}d_{i}(j)+d_{s}(j)=1$ for every $s\in\{k,\ldots,mn\}$. Applying Lemma \ref{4.1} 
again, we see that for every $j\in\{1,\ldots,n\}$, we have
$$V\phi(E_{11}\otimes E_{jj})V^*=R_j\oplus t_jI_{k+1},$$
where each eigenvalue of $R_j$ is larger than or equal to $t_j$. Further, $0\in W_k(\phi(E_{11}\otimes E_{jj}))$ implies $t_j=0$ for $1\leq j\leq n$, which contradicts with the assumption that $\phi(E_{11}\otimes E_{11})$ is not positive semidefinite.

\-

\noindent
{\em \underline{Claim 4}.} Suppose there exists $i\in\{1,\ldots,m\}$ such that the
eigenvalues of  $\phi(E_{ii}\otimes I_{n})$ are
$a_1(i)\geq a_2(i)\geq\cdots\geq a_{mn}(i)$ with $a_{mn}(i)<0$. Then
\begin{equation}\label{eq6}
a_1(i)=a_2(i)=\cdots=a_{k+1}(i).
\end{equation}
Moreover,  $\phi(E_{jj}\otimes I_{n})$ has a negative eigenvalue for every $j\in \{1,\ldots,m\}$.

\-

Without loss of generality, we assume $i=1$ and $$\phi(E_{11}\otimes I_{n})=\Diag(a_1(1),\cdots,a_{mn}(1))$$
with
\begin{equation}\label{eq11}
a_1(1)\geq a_2(1)\geq \cdots\geq a_{mn}(1),
\end{equation}
where $a_1(1),\ldots,a_{k+1}(1)$ are not identical.
Let us denote the diagonal entries of $\phi(E_{jj}\otimes I_{n})$
by $h_{1}(j),\ldots,h_{mn}(j)$ and  the diagonal entries of $U\phi(E_{jj}\otimes I_n)U^*$ by  $h_{1}(U,j),\ldots,h_{mn}(U,j)$.
Note that $a_{k}(1)$ and $a_{k+1}(1)$ must be equal.
Otherwise, by the fact that $W_k(\phi((E_{11}+E_{jj})\otimes I_n))=[0,2n/k]$,
we have $\sum_{r=1}^ka_{r}(1)=\sum_{r=1}^kh_{r}(j)=n$ for $j=2,\ldots,m$.
But then, if $Z = \sum_{j=1}^m \phi( E_{jj} \otimes I_n)$,
the leading $k\times k$ submatrix of $Z$ will have trace $mn$,
which contradicts with $W_k(\phi(I_{mn}))=\{1\}.$ Suppose $a_1(1)>a_k(1)$, i.e., there are  integers  $s,t\in\{1,\ldots,k-1\}$ such that
\begin{equation}\label{ast}a_1(1)\geq \cdots\geq a_s(1)>a_{s+1}(1)=\cdots
= a_{k+t}(1)>a_{k+t+1}(1)\geq \cdots\geq a_{mn}(1).\end{equation}
We are going to show that
\begin{equation}\label{hst}
h_1(j)=\cdots=h_s(j)=h_{k+t+1}(j)=\cdots=h_{mn}(j) \quad {\rm~for~}  j=2,\ldots,m.
\end{equation}

Let $\gamma=m/2$ when $m$ is even and $\gamma=(m+1)/2$ when $m$ is odd.
 Denote by
$$G=\phi((2(E_{11}+\cdots+E_{\gamma-1,\gamma-1})+E_{\gamma,\gamma})\otimes I_n),$$ $$G_1=\phi(( E_{11}+\cdots+E_{\gamma-1,\gamma-1} )\otimes I_n), \quad \hbox{ and } \quad
G_2=\phi(( E_{11}+\cdots+ E_{\gamma,\gamma})\otimes I_n).$$
Then
we have
\begin{eqnarray} \label{eq116}
W_k(G_1)=[0,(\gamma-1)n/k],\quad w_k(G)
 = w_k(G_1)
+ w_k(G_2),
\end{eqnarray}
where the $k$-numerical radius   $w_k(G)$, $w_k(G_1)$, and $w_k(G_2)$ are the right  end points of $W_k(G), W_k(G_1)$, and $W_k(G_2)$, respectively. Let  $U$ be a unitary such that the sum of the first $k$ diagonal entries of $UGU^*$ equals to $kw_k(G)$. Then  the sum of the first $k$ diagonal entries of $UG_iU^*$ equals to $kw_k(G_i)$ for $i=1,2$.
 We assert that the   following conditions hold.
 \begin{itemize}
\item[(a)] $\sum_{p=1}^kh_p(U,\gamma)=n$ when $m$ is even and $\sum_{p=1}^kh_p(U,\gamma)=n/2$ when $m$ is odd.
\item[(b)]  $U\phi(E_{jj}\otimes I_n)U^*=B_{j1}\oplus B_{j2}$ with $B_{j1}\in M_k$ and $\tr(B_{j1})=n$ for $j=1,\ldots,\gamma-1$.
\item[(c)] $U\phi(E_{jj}\otimes I_n)U^*=B_{j1}\oplus B_{j2}$ with $B_{j1}\in M_k$ and $\tr(B_{j2})=n$ for $j=\gamma+1,\ldots,m$.
\end{itemize}

Since $$\begin{array}{rl}(\gamma-1)n=&kw_k(G_1)=\sum_{j=1}^{\gamma-1}\sum_{r=1}^kh_r(U,j)\le \sum_{j=1}^{\gamma-1}n=(\gamma-1)n\, \mbox{, and }\\&\\
\dfrac{mn}2=&kw_k(G_2)=\sum_{j=1}^{\gamma}\sum_{r=1}^kh_r(U,j)\le \dfrac{mn}2.
\end{array}
$$
 It follows that $\sum_{p=1}^kh_p(U,j)=n$ for
$j=1,\ldots,\gamma-1$ and (a) holds.  Applying Lemma \ref{4.1}, 
we have the condition (b).

For any $j\in\{\gamma+1,\ldots,m\}$, since
$$W_k(\phi(E_{jj}\otimes I_n))= W_k(E_{jj} \otimes I_n) = [0,n/k] ,$$
the sum of any $k$ diagonal entries of $U\phi(E_{jj}\otimes I_n)U^*$ lies in
$[0, n]$. Now,  the right end point of the set
$$W_k(\phi(( E_{11}+\cdots+E_{\gamma,\gamma}+E_{jj})\otimes I_n))$$
is $1$,  and the sum of the first $k$ diagonal entries of
$U\phi(( E_{11}+\cdots+E_{\gamma,\gamma})\otimes I_n)U^*$ is $k$.
We see that $\sum_{p=1}^kh_p(U,j)=0$ and $\sum_{p=k+1}^{mn}h_p(U,j)=n$. Hence, we get (c).

 Suppose $U_1$ and $U_2$ are unitary matrices such that
$$U_1B_{11}U_1^*=\Diag(a_1(1),\ldots,a_k(1))~{\rm and} ~U_2B_{12}U_2^*=\Diag(a_{k+1}(1),\ldots,a_{mn}(1)).$$   Replace $U$ with $(U_1\oplus U_2)U$. Then the new matrix $U$ also satisfies (a), (b) and (c). Moreover,  $U$ is of   diagonal block form $U_3\oplus U_4\oplus U_5$ with $U_3\in M_s, U_5\in M_{k-t}$. Since any unitary $U_3$ and $U_5$ will yield the same summation of the first $k$ diagonal entries of $UGU^*$, we can   assume $U=I_s\oplus U_4\oplus I_{k-t}$.
Thus, for  $j=\gamma+1,\ldots,m$, it follows from  (c) that
$$h_p(j)=h_p(U,j)\leq h_q(U,j)=h_q(j) \quad {\rm~ for~ all~}  p\in\{1,\dots,s\}
\ {\rm~and~} \  q\in\{k+t+1,\ldots,mn\}.$$
On the other hand, since  $ W_k(\phi(( E_{11}+ E_{jj})\otimes I_n))=[0,2n/k]$, there
exists a unitary matrix $V$ of the form $V=I_s\oplus V_1\oplus I_{k-t}$
 such that $\sum_{p=1}^kh_p(V,j)=n$, which implies
$$h_p(j)\geq h_q(j) \quad {\rm~ for~ all~}  p\in\{1,\dots,s\} \
{\rm~and~}  q\in\{k+t+1,\ldots,mn\}.$$
  Hence, we have
\begin{equation*}
h_1(j)=\cdots=h_s(j)=h_{k+t+1}(j)=\cdots=h_{mn}(j) \quad {\rm~for~}  j=\gamma+1,\ldots,m.
\end{equation*}
Interchanging the roles of $\{2,\ldots,\gamma\}$ and $\{\gamma+1,\ldots,2\gamma-1\}$ and applying the same argument, we have (\ref{hst}).

Let $T=\{1,k+1,\ldots,mn-1\}$. Note that   $h_1(U,1)=a_1(1)>a_{mn}(1)=h_{mn}(U,1)$.  We have $$\sum_{p\in T}h_p(U,1)>\sum_{p=k+1}^{mn}h_p(U,1)=0.$$ By the fact that $h_1(U,j)=h_1(j) = h_{mn}(j)=h_{mn}(U,j)$ for
$j = 2, \dots, m$,  we have
\begin{eqnarray}\label{eq1021}
\sum_{p\in T}\left(h_p(U,1)+\sum_{j=\gamma}^{m}h_p(U,j)\right)
 &=&\sum_{p\in T}h_p(U,1)+\sum_{j=\gamma}^{m}\left(\sum_{p=k+1}^{mn}h_p(U,j)\right)\\
\nonumber&=&\sum_{p\in T}h_p(U,1)+k >k
\end{eqnarray}
which contradicts with $w_k(\phi((E_{11}+E_{\gamma,\gamma}+\cdots+E_{mm})\otimes I_n))=1$.
Hence, we get (\ref{eq6}).

 Next, suppose there is some $2\leq j\leq m$ such that $\phi( E_{jj} \otimes I_n)\geq 0$, say, $j=2$. Again, we can assume  $\phi(E_{11}\otimes I_n)=\Diag(a_1(1),\ldots,a_{mn}(1)) $
 with
 $$a_1(1)=\cdots=a_{k+t}(1)>a_{k+t+1}(1)\geq\cdots\geq a_{mn}(1),\quad 1\leq t\leq k-1$$
 and  $\phi(E_{22}\otimes I_n)=\Diag(a_1(2),\ldots,a_{mn}(2))$ with
 $$a_1(2)\geq \cdots\geq a_s(2)>a_{s+1}(2)=\cdots=a_{mn}(2)=0,\quad 1\leq s\leq k.$$

 Recall that there is a unitary matrix $U$ satisfying (a), (b), and (c).
 Suppose $U_1,U_2$ are unitary matrices   such that $U_1B_{21}U_1^*=\Diag(a_1(2),\ldots,a_k(2))$ and $U_2B_{12}U_2^*=\Diag(a_{k+1}(1),\ldots,a_{mn}(1))$.  Replace $U$ with $(U_1\oplus U_2)U$. Then the new matrix $U$ also satisfies  (a), (b), and (c). Moreover,
 \begin{eqnarray*}
 U\phi(E_{11}\otimes I_n)U^*&=&\Diag(a_1(1),\ldots,a_{mn}(1)),\\
 U\phi(E_{22}\otimes I_n)U^*&=&\Diag(a_1(2),\ldots,a_{mn}(2))
 \end{eqnarray*}
 which implies that $U$ is of the form $U=U_3\oplus U_4\oplus U_5$ with $U_3\in M_s, U_5\in M_{k-t}$. We can assume $U_3=I_s$ and $U_5=I_{k-t}$.

For any given $j\in\{\gamma+1,\ldots, m\}$, we denote by $\alpha_1\geq\cdots\geq\alpha_k $ the eigenvalues of $B_{j1}$ and $ \beta_1\geq\cdots\geq\beta_k$ the eigenvalues of $B_{j2}$. Then $\beta_k\geq \alpha_1$. By $W_k(\phi((E_{22}+E_{jj})\otimes I_n))=[0,2n/k]$, there is a unitary $V$ such that
\begin{eqnarray}\label{eq1019}
V(B_{21}\oplus B_{22})V^*=Y\oplus 0~{\rm and}~   V(B_{j1}\oplus B_{j2})V^*=Z_{1}\oplus Z_{2}
\end{eqnarray}
with $Y,Z_{1}\in M_k$, $\tr(Y)=\tr(Z_{1})=n$. Suppose $W$ is a unitary matrix such that $WYW^*=B_{21}$. Replace $V$ with $(W\oplus I_k)V$. Then we still have (\ref{eq1019}) with $Y=B_{21}$. Moreover, $V$ is of the form $V=V_1\oplus V_2$ with $V_1\in M_s$ and we can assume $V_1=I_s$. Partition $B_{j1}$ and $Z_1$    as
 $$B_{j1}=\begin{bmatrix}
C_{11}&C_{12}\\
C_{21}&C_{22}\\
\end{bmatrix},\quad Z_1=\begin{bmatrix}
D_{11}&D_{12}\\
D_{21}&D_{22}\\
\end{bmatrix} $$ with $C_{11},D_{11}\in M_s$. We can rewrite the second equation in (\ref{eq1019}) as
\begin{eqnarray*}
\begin{bmatrix}
I_s&   \\
&V_2\end{bmatrix}
\begin{bmatrix}
C_{11}&C_{12}&\\
C_{21}&C_{22}&\\
& & B_{j2}\end{bmatrix}\begin{bmatrix}
I_s&   \\
&V_2\end{bmatrix}^*=\begin{bmatrix}
D_{11}&D_{12}&\\
D_{21}&D_{22}&\\
& & Z_{2}\end{bmatrix}.
\end{eqnarray*}
It is clear that $D_{11}=C_{11}$. Since $\tr(C_{11}+D_{22})=n$ equals to the sum of the $k$ largest eigenvalues of $B_{j1}\oplus B_{j2}$, we see that $\tr (D_{22})=\sum_{p=1}^{k-s}\beta_p$.
Applying Lemma \ref{4.1}, 
we have $D_{12}=D_{21}^*=0$, which implies $C_{12}= C_{21}^*=0$ and $\sigma(C_{11})=\{\alpha_1\ldots,\alpha_s\}$. It follows that $\alpha_1=\cdots=\alpha_s=\beta_{k-s+1}=\cdots=\beta_k$ and $C_{11}=\alpha_1 I_s$.

Similarly, considering $W_k(\phi((E_{11}+E_{jj})\otimes I_n))=[0,2n/k]$, there is a unitary $\tilde{V}$ such that
\begin{eqnarray}\label{eq1020}
\tilde{V}(B_{11}\oplus B_{12})\tilde{V}^*=a_1(1)I_k\oplus \tilde{Y}~{\rm and}~   \tilde{V}(B_{j1}\oplus B_{j2})\tilde{V}^*=\tilde{Z}_{1}\oplus \tilde{Z}_{2}
\end{eqnarray}
with $\tilde{Z}_{1}\in M_k$, $ \tr(\tilde{Z}_{1})=n$. Suppose $\tilde{W}$ is a unitary matrix such that $\tilde{W}\tilde{Y}\tilde{W}^*=B_{12}$. Replace $\tilde{V}$ with $(I_k\oplus \tilde{W})\tilde{V}$. Then we still have (\ref{eq1020}) with $\tilde{Y}=B_{12}$. Moreover, $\tilde{V}$ is of the form $\tilde{V}=\tilde{V}_1\oplus \tilde{V}_2$ with $V_2\in M_{k-t}$ and we can assume $V_2=I_{k-t}$. Partition $B_{j2}$ and $\tilde{Z}_2$ as
 $$B_{j2}=\begin{bmatrix}
R_{11}&R_{12}\\
R_{21}&R_{22}
\end{bmatrix},\quad \tilde{Z}_2=\begin{bmatrix}
S_{11}&S_{12}\\
S_{21}&S_{22}\\
\end{bmatrix} $$ with $R_{11}, S_{11}\in M_t$. We can rewrite the second equation in (\ref{eq1020}) as
\begin{eqnarray*}
\begin{bmatrix}
\tilde{V}_1&\\
 &I_{k-t}\end{bmatrix}
\begin{bmatrix}
C_{11}& &&\\
 &C_{22}& &\\
& & R_{11}&R_{12}\\
& &R_{21}&R_{22}\end{bmatrix}\begin{bmatrix}
\tilde{V}_1&\\
 &I_{k-t}\end{bmatrix}^*=\begin{bmatrix}
\tilde{Z}_1 & &\\
 & S_{11}&S_{12}\\
&S_{21}&S_{22}\end{bmatrix}.
\end{eqnarray*}
Since $\tr(\tilde{Z}_1)=n$ is the sum of the $k$ largest eigenvalues of $B_{j1}\oplus B_{j2}$, which equals to the sum of the  $k$ largest eigenvalues of $C_{11}\oplus C_{22}\oplus R_{11}$, we see that the eigenvalues of $R_{11}$ are also the  $t$ largest eigenvalues of $B_{j2}$. Hence, we have $R_{12}= R_{21}^*=0$ and  $R_{22}=\beta_{k}I_{k-t}=\alpha_{1}I_{k-t}$.

So we have
\begin{equation}\label{eq31}
h_1(j)=h_1(U,j)=h_{mn}(U,j)=h_{mn}(j) ~{\rm for}~  j=\gamma+1,\ldots, m.
  \end{equation}
  Similarly, (\ref{eq31}) holds  for $j=2,\ldots,\gamma $.

   Again, we have (\ref{eq1021}), which contradicts with $w_k(\phi((E_{11}+E_{\gamma,\gamma}+\cdots+E_{mm})\otimes I_n))=1$.
      Therefore, $\phi(E_{jj}\otimes I_n)$ has a negative eigenvalue for every $1\leq j\leq m$.

\-

\noindent
{\em \underline{Claim 5}.} For any $1\leq i\leq m$ and $1\leq j\leq n$, the largest eigenvalue of $\phi(E_{ii}\otimes E_{jj})$ is $1/k$ and, hence, $\frac{1}{k}I_{mn}-\phi(E_{ii}\otimes E_{jj})\geq 0$.

\-

Given any $1\leq i\leq m$,
by the previous claims, we can assume
 $$\phi(E_{ii}\otimes I_n)=\Diag(a_1(i),\ldots,a_{mn}(i))$$
 with $a_1(i)=\cdots=a_{k+1}(i)\geq\cdots\geq a_{mn}(i)$ and $a_{mn}(i)<0$.  Denote by $d_1(i,j),\ldots,d_{mn}(i,j)$ the diagonal entries of $\phi(E_{ii}\otimes E_{jj})$. Then
 $$\sum_{u\in T}d_u(i,1)=\cdots=\sum_{u\in T}d_u(i,n)=1$$ for any
 $T\subseteq\{1,\ldots,k+1\}$ with $|T|=k$. It follows that $d_u(i,j)=1/k$ for all $1\leq u\leq k+1$ and $1\leq j\leq n$.

 Applying Lemma \ref{4.1}, 
 each $\phi(E_{ii}\otimes E_{jj})$ is of the form $$\Diag(d_1(i,j), \ldots, d_{k+1}(i,j))\oplus X(i,j),$$
  where the largest eigenvalue of $X(i,j)$ is less than or equal to $1/k$. Thus, we get the claim.

\-

Now, let $\psi(A\otimes B)=(\tr(A\otimes B)/k)I_{mn}-\phi(A\otimes B)$. Then $W_k(\psi(A\otimes B))=W_k(A\otimes B)$ for all $A\in  H_m$, $B\in  H_n$, and $\psi(E_{ii}\otimes E_{jj})\geq 0$ for all $1\leq i\leq m$, $1\leq j\leq n$.
Applying the same arguments as in the first case on $\psi$, we conclude that
$\psi$ satisfies {\bf (1)} and, hence, $\phi$ satisfies {\bf (2)}. The proof is completed.\qed

\newpage
\section*{Acknowledgment}

This research was supported by a Hong Kong RGC grant PolyU 502411 with Sze as the PI and Poon as the Co-I.
The grant also supported the post-doctoral fellowship of Huang
and the visit of Fo\v{s}ner to the Hong Kong Polytechnic University in the summer of 2012.
She gratefully acknowledged the support and kind hospitality from the host university.
Fo\v sner was supported by the bilateral research program between Slovenia and US (Grant No. BI-US/12-13-023).
Li was supported by a RGC grant and a USA NSF grant; this research was done when he was
a visiting professor of the University of Hong Kong in the spring of 2012; furthermore,
he is an honorary professor of Taiyuan University of Technology (100 Talent Program scholar),
and an honorary professor of the  Shanghai University. Poon was supported by a RGC grant and a USA NSF grant.


\baselineskip16pt
\small


\end{document}